\definecolor{dblue}  {RGB}{20,66,129}
\definecolor{jpurple}{HTML}{684B81}
\definecolor{jgreen} {HTML}{00846E}
\tikzset{
    eo/.style={
        line width=2pt,
        dblue!60
    },
    fo/.style={ 
        decoration={
            markings,
            mark=at position #1 with {\arrow[line width=3pt]{>}}
        },
        postaction={decorate},
        eo
    },
    fo/.default=0.58,
    ro/.style={ 
        decoration={
            markings,
            mark=at position #1 with {\arrow[line width=3pt]{<}}
        },
        postaction={decorate},
        eo
    },
    ro/.default=0.58,
    main/.style={
        circle, 
        fill, 
        inner sep=0pt, 
        minimum size=10pt, 
        text=white,
        draw=dblue!70,
        fill=dblue!70
    },
    arrow/.style={
        ->,
        line width=2pt
    },
    drill/.style={draw=#1,fill=#1},
    col2/.style={drill=jgreen!70},
    col3/.style={drill=jpurple}
}
\numberwithin{equation}{section}
\newcommand{\ds}{\displaystyle}
\DeclareMathOperator{\id}{\text{id}}
\DeclarePairedDelimiter\abs{\lvert}{\rvert}
\DeclarePairedDelimiter\norm{\lVert}{\rVert}%
\let\oldabs\abs
\def\abs{\@ifstar{\oldabs}{\oldabs*}}
\let\oldnorm\norm
\def\norm{\@ifstar{\oldnorm}{\oldnorm*}}
\newcommand{\conf}[2]{#1^{(#2)}}
\newcommand{\blank}{\rule{0.2cm}{0.15mm}}
\newcommand{\floor}[1]{\lfloor #1 \rfloor}
\newcommand{\ceil}[1]{\lceil #1 \rceil}
\newcommand*\ex{\text{ex}}
\DeclareRobustCommand{\stirling}{\genfrac\{\}{0pt}{}}
\newtheorem{thm}{Theorem}[section]
\newtheorem{prop}[thm]{Proposition}
\newtheorem{lem}[thm]{Lemma}
\newtheorem{defn}[thm]{Definition}
\newtheorem{rem}[thm]{Remark}
\newtheorem{eg}[thm]{Example}
\title[Toppleable Permutations, Excedances, Acyclic Orientations]
{Toppleable Permutations, Excedances and Acyclic Orientations}
\author{Arvind Ayyer}
\address{Arvind Ayyer, Department of Mathematics, Indian Institute of Science, Bangalore  560012, India.}
\email{arvind@iisc.ac.in}
\author{Daniel Hathcock}
\address{Daniel Hathcock, Department of Mathematical Sciences, 
Carnegie Mellon University, Pittsburgh, U.S.A.}
\email{dhathcoc@andrew.cmu.edu}
\author{Prasad Tetali}
\address{Prasad Tetali, School of Mathematics and School of Computer Science, Georgia Institute of Technology, Atlanta, GA 30332, U.S.A. }
\email{tetali@math.gatech.edu}
\subjclass[2020]{05A19, 05A05, 05C30}
\keywords{toppleable permutations, acyclic orientations, excedances, collapsed permutations, complete bipartite, complete multipartite, Genocchi numbers}
\date{\today}
\begin{document}

\begin{abstract}
Recall that an excedance of a permutation $\pi$ is any position $i$
such that $\pi_i > i$. Inspired by the work of Hopkins, McConville and
Propp (Elec. J. Comb., 2017) on sorting using toppling, we say that
a permutation is toppleable if it gets sorted by a certain sequence of
toppling moves. One of our main results is that the number of toppleable permutations on $n$ letters is the same as those for which excedances happen exactly at $\{1,\dots, \lfloor (n-1)/2 \rfloor\}$. Additionally, we show that the above is also the number of acyclic
orientations with unique sink (AUSOs) of the
complete bipartite graph $K_{\lceil n/2 \rceil, \lfloor n/2 \rfloor + 1}$.
We also give a formula for the number of AUSOs of complete multipartite graphs. We conclude with observations on an extremal question of Cameron et al. concerning maximizers of (the number of) acyclic orientations, given a prescribed number of vertices and edges for the graph. 
\end{abstract}

\maketitle

\section{Introduction}

The sandpile model on graphs has been extensively studied, ever since its introduction by Dhar~\cite{dhar-1990} and independently by Bj\"orner, Lovasz and Shor~\cite{bjorner-lovasz-shor-1991} (where it is called the chip-firing game). From the mathematical standpoint, this model has influenced subjects as diverse as combinatorics, probability theory, algebraic geometry and combinatorial commutative algebra. See the recent book by Klivans~\cite{klivans-2019} for details.

In the usual sandpile model, all chips are indistinguishable. An impetus to studying the sandpile model with labeled chips is the recent work of Hopkins, McConville and Propp~\cite{hopkins2016sorting}, who studied the model on $\mathbb{Z}$ with chips labeled $1,\dots,n$ initially at the origin. We use the term {\em toppling} to describe this dynamics.
For $n$ even, they showed that the resulting configuration is always sorted. But for $n$ odd, they conjectured based on simulations that the probability of being sorted tends to $1/3$ as $n \to \infty$. This conjecture has motivated a lot of work since then~\cite{galashin-et-al-2018, galashin-et-al-2019, hopkins-postnikov-2019, klivans-liscio-2020, klivans-felzenszwalb-2021, klivans-liscio-2020+}.

In this work, we consider the model on $\mathbb{Z}$ with initial configurations motivated by the analysis of certain special sequences of moves in the original model. We show that the enumerative combinatorics of 
those initial configurations which get sorted is very rich and related to classical permutation enumerations via the excedance statistic. We also show bijectively that these objects are related to the number of acyclic orientations with a unique sink (AUSOs) of complete bipartite graphs. 
The relation between acyclic orientations of bipartite graphs and the excedance statistic has also been studied in the context of so-called poly-Bernoulli numbers~\cite{benyi-hajnal-2017}.
We then extend our explicit enumeration result on AUSOs of complete bipartite graphs to the $N$-partite case (for $N\ge 2$), with prescribed sizes for  the $N$ parts.

Further motivated by a result in \cite{hopkins2016sorting}, we study a new class of permutations which we call {\em collapsed permutations}, and show that they are enumerated by the classical Genocchi numbers of the first and second kind. 
Our final contribution in this work briefly considers a conjecture of Cameron, Glass and Schumacher~\cite{cameron2014acyclic} that Tur\'an graphs maximize the number of acyclic orientations over graphs with given numbers of vertices and edges. We show that when $m \geq \binom{n}{2} - \floor{\frac{n}{2}}$, a Tur\'an graph whose parts have size 1 or 2 maximizes the number of AOs over all graphs on $n$ vertices and $m$ edges.

The plan of the rest of the paper is as follows. In \cref{sec:topp}, we motivate and introduce the toppling model on permutations, prove some of its basic properties, and state the main results. In \cref{sec:topp=exc}, we prove one of the main results stated in the previous section. In \cref{sec:collap}, we define the set of collapsed permutations and describe the connections to the Genocchi numbers.
In \cref{sec:complete-bipartite}, we prove the second main result of \cref{sec:topp}.
\cref{sec:ao-enum} is devoted to proving enumeration formulas for AUSOs of complete multipartite graphs. 
These can also be obtained using the generating function for the chromatic polynomial of complete multipartite graphs given in \cite[Solution to Problem~5.6]{stanley-ec2}; see \cref{rem:stanley} for more details.
\cref{sec:cameron} discusses the extremal question of maximizing the number of acyclic orientations with given numbers of vertices and edges.

\section{Toppling on permutations}
\label{sec:topp}

For $n \geq 2$, we will consider labeled chip configurations on the segment $L_n = \{-\lfloor (n+1)/2 \rfloor, \dots, -1, 0, 1, \dots, \lfloor n/2 \rfloor + 1\} $. 
We will think of $0$ as the origin.
At each position in $L_n$, we will place a certain number of labeled chips, with a total number of $n$ chips labeled $1$ through $n$.
We will consider the discrete dynamical system called {\em toppling}, defined in \cite{hopkins2016sorting} (called chip-firing therein), on configurations in $L_n$ as follows:
\begin{enumerate}
    \item If no position in $L_n$ has two or more chips, stop. Else, go to step 2.
    \item Choose a position $i$ uniformly at random among positions occupied by more than one chip.
    \item Pick two chips $\alpha < \beta$ uniformly from those at site $i$. 
    \item Move $\alpha$ to position $i-1$ and $\beta$ to $i+1$. 
    \item Go to step 1.
\end{enumerate}

We will be interested in a special class of initial configurations arising from permutations as follows. As usual, let $[n]$ denote $\{1,\dots,n\}$.
For $\pi = (\pi_1,\dots,\pi_n) \in S_n$ and an element $r \in [n+1]$, 
we will place the elements $\pi_1,\dots,\pi_n$ in positions $-\lfloor (n-1)/2 \rfloor, \dots, -1, 0, 1, \dots, \lfloor n/2 \rfloor$, 
shift the labels of elements in $\pi$ greater than or equal to $r$ by $1$, and place $r$ at the origin. We will call this initial condition $\conf{\pi}{r}$.
For example, with $r = 2$ and permutations $\rho = (3,1,4,2) \in S_4$ and $\sigma = (2,5,1,3,4) \in S_5$, we obtain 
\begin{equation}
\label{eg-conf}
\conf{\rho}{2} = \;
\begin{matrix}
&&1 \\
& 4 & 2 & 5 & 3 \\
\hline
-2 & -1 & 0 & 1 & 2 & 3
\end{matrix} \;\;\;,
\quad
\conf{\sigma}2 = \;
\begin{matrix}
&&& 1 \\
& 3 & 6 & 2 & 4 & 5 \\
\hline
-3 & -2 & -1 & 0 & 1 & 2 & 3
\end{matrix}\; \; \;.
\end{equation}
For convenience, we will denote configurations in one-line notation similar to that used for permutations, except that when there are multiple chips at a site, we will enclose them in parenthesis. For the above examples, we will write $\conf{\rho}2 = (4,(1,2),5,3)$ and $\conf{\sigma}2 = (3, 6, (1,2), 4, 5)$. The vacancies in the first and last position will be understood.

The motivation for studying this model comes from the toppling dynamical system on $\mathbb{Z}$ with $n \geq 2$ chips initially at the origin, studied in detail in \cite{hopkins2016sorting}. Fix an $r \in [n]$ and consider a special case of topplings of this model where we do not permit $r$ to be picked as one of two chips to be moved. If $n$ is odd, $r$ will end up by itself at the origin and the process will terminate. If $n$ is even, something more interesting happens. At the end of these topplings, $r$ will have a partner at the origin and we will end up with a configuration $\conf{\pi}{r}$ in $L_{n-1}$ for some $\pi \in S_{n-1}$.
If we now allow $r$ to topple, all the chips will be {\em sorted}, thanks to \cite[Theorem 13]{hopkins2016sorting}. This tells us that for a large class of permutations $\pi$, the final configuration after toppling $\conf{\pi}{r}$ is sorted. 
Thus, it is natural to try to understand the set of all permutations for which sorting happens.

We begin with some basic properties of the toppling dynamics in $L_n$ starting with $\conf{\pi}r$.

\begin{prop}
\label{prop:determ}
Fix $\pi \in S_n$ and $r \in [n+1]$.
The toppling dynamical system on $L_n$ with initial condition $\conf{\pi}r$ satisfies the following properties.
\begin{enumerate}
    \item The final configuration is deterministic.
    \item At every step, the configuration lives in $L_n$. In other words, no chip leaves $L_n$.
    \item In the final configuration, there is precisely one chip at every position in $L_n$, except the origin (resp. position 1) when $n$ is odd (resp. even). 
\end{enumerate}
\end{prop}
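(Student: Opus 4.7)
The plan is to derive all three properties from an explicit computation of the chip-firing odometer for $\conf{\pi}r$. The unlabeled chip-firing on $\mathbb{Z}$ is abelian, so the odometer $u\colon \mathbb{Z}\to\mathbb{Z}_{\ge 0}$ recording the number of firings at each site is independent of the toppling order, and the stable (unlabeled) configuration is uniquely determined by the relation $\mathrm{Final}(i) = \mathrm{Initial}(i) + u(i-1) - 2u(i) + u(i+1)$ together with the stability constraint $\mathrm{Final} \le 1$ everywhere. I would therefore guess an explicit $u$, check $u \ge 0$, and verify that the resulting Final is stable with the claimed support; parts (2) and (3) then follow immediately at the unlabeled level.

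Writing $k = \lfloor n/2 \rfloor$, the natural candidate is the tent function $u(i) = \max(0,\,k+1-|i|)$ when $n = 2k+1$ is odd, and its double-peaked variant $u(i) = \min(k+1-i,\,k+i)$ on $\{-k+1,\dots,k\}$ with $u(i) = 0$ outside, when $n = 2k$ is even. A direct case check then shows $u \ge 0$, that $u$ is supported strictly inside $L_n$ (in particular $u$ vanishes at the two boundary sites $-\lfloor(n+1)/2\rfloor$ and $\lfloor n/2\rfloor+1$ of $L_n$, so those sites are never fired and no chip can leave $L_n$ at any step, giving (2)), and that $\mathrm{Final}(i) = 1$ at every position of $L_n$ except at the origin ($n$ odd) or at position $1$ ($n$ even), where it equals $0$, giving (3).

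For (1), the unlabeled part is already handled by uniqueness of the odometer; what remains is the random choice in step~(3) of which pair of chips to fire when a site has three or more. My plan is to show this situation never arises, i.e.\ along the dynamics from $\conf{\pi}r$ no site ever carries more than two chips, so that step~(3) is a trivial choice. The invariant I would maintain is that at every time the sites whose chip count is not equal to $1$ (so equal to $0$ or $2$) form, when read left to right, an alternating sequence of $0$'s and $2$'s, separated by singletons. A short case analysis (splitting on the chip counts at the two neighbors of the fired site) shows that firing any $2$-chip site preserves this alternation; in particular $2$-chip sites are always pairwise non-adjacent, so no firing can push a site to three chips, and firings at distinct $2$-chip sites do not interfere with each other's labels. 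Combined with the unlabeled abelian property for the choice of site in step~(2), this yields determinism of the labeled final configuration.

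The main obstacle I anticipate is verifying the alternating invariant, which requires handling the firing of a $2$-chip site according to whether it is interior or at an end of the alternating sequence and according to whether each of its neighbors currently holds $0$ or $1$ chip. The case analysis is routine but a bit tedious. If this bookkeeping becomes heavier than desired, one can instead bypass the invariant entirely by appealing to the labeled version of the abelian property established in~\cite{hopkins2016sorting}, which yields determinism of the labeled final configuration directly.
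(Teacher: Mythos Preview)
Your proposal is correct, and for part~(1) it is essentially the paper's own argument: the paper proves the invariant ``between any two sites holding two chips there is an empty site,'' which is a slight weakening of your alternation invariant but serves the same purpose---it forces every site to hold at most two chips, so the random choice in step~(3) of the dynamics disappears. Your case analysis and the paper's induction on the time step are the same computation.

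Where you genuinely diverge is in parts~(2) and~(3). The paper argues these directly and elementarily: once the separation invariant is in hand, it tracks what happens at the extremal sites of $L_n$ (a boundary site can receive a chip only once, after which its neighbor is permanently $\le 1$), and then strips off the two boundary sites to reduce to $L_{n-2}$. Your route instead computes the odometer in closed form and reads off~(2) and~(3) from $u$ and $\mathrm{Final}=\mathrm{Initial}+\Delta u$. This buys you exact quantitative information (e.g.\ the number of topplings at each site, which later in the paper reappears as the ``pass'' structure), at the cost of having to justify that your candidate $u$ really is the odometer---either via the least-action characterization or by exhibiting a legal firing sequence (the passes) that realizes it. The paper's argument is more self-contained and avoids invoking the abelian theory as a black box; yours is more systematic and connects cleanly to the pass decomposition used later.

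One small point worth tightening in your write-up: after establishing ``at most two chips per site,'' both you and the paper still owe a sentence on why the random choice of \emph{which} site to fire in step~(2) does not affect the labeled outcome. Your parenthetical (non-adjacent 2-sites $\Rightarrow$ local commutativity of the two labeled firings $\Rightarrow$ confluence via the diamond lemma) is exactly the right argument, and it is worth stating explicitly rather than deferring to ``the unlabeled abelian property,'' which by itself does not control labels.
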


\begin{proof}
We begin with (1) by showing that at each point in time, for any two positions $a < b \in L_n$ each containing two chips, there must be some $c$ with $a < c < b$ and no chips at $c$. In particular, this implies that no position can ever have more than 2 chips.

The initial configuration $\conf{\pi}{r}$ satisfies this condition vacuously, as there is only a single position containing 2 chips. Then inducting on the time step $t$, suppose we toppled position $a$ at time $t$. Looking at position $a + 1$, it could have contained either 0 or 1 chip before toppling $a$, by the induction hypothesis. If $a + 1$ contained 1 chip, then it now contains 2 chips. But then for any $b > a$ containing 2 chips, the induction hypothesis gives that there is an empty position between $a$ and $b$, and it cannot have been $a + 1$, so that empty space separates $a +1$ from any such $b$. Likewise, any position to the left of $a + 1$ containing 2 chips is separated from $a + 1$ by the now-empty position $a$. Otherwise, $a + 1$ did not contain a chip. Then if $a + 1$ was the empty space separating any two positions with 2 chips, the now-empty position $a$ now separates them. The argument for position $a-1$ is identical. 

From (1), no position contains more than 2 chips. The only way a chip can leave $L_n$ is if the leftmost or rightmost positions contain more than one chip. Focus on the position $\floor{n/2}$. When it contains two chips, it transfers one chip to the rightmost position $\floor{n/2}+1$ and itself becomes empty. Arguing inductively, one can see that from that point on, $\floor{n/2}$ can never contain more than one chip. 
The argument for the leftmost position is similar. This proves (2).

The argument for (3) is similar to that of (2). The base cases of $n=2,3$ are easily checked. Once there is a chip at positions $-\floor{(n+1)/2}$ and $\floor{n/2+1}$, the adjoining sites are empty and we end up with a configuration in $L_{n-2}$.
\end{proof}

By \cref{prop:determ}(3), the final configuration can be interpreted as a permutation in $S_{n+1}$ and by \cref{prop:determ}(1), the toppling dynamical system on $L_n$ can be considered as a map 
$\mathcal{T}: S_n \times [n+1] \to S_{n+1}$.
Let $\id$ be the identity (namely sorted) permutation whose size will be clear from the context. 

\begin{defn}
\label{def:topp}
We say that a permutation $\pi$ is {\em $r$-toppleable} if $\mathcal{T}(\pi,r) \allowbreak = \id$, and we say that $\pi$ is {\em toppleable} if $\pi$ is $r$-toppleable for all $r \in [n+1]$. 
\end{defn}

For the examples in \eqref{eg-conf}, the final configurations can be seen to be $\mathcal{T}(\rho,2) = (1,2,3,4,5)$ and $\mathcal{T}(\sigma,2) = (1,2,3,4,6,5)$. Therefore, $\rho$ is $2$-toppleable, but $\sigma$ is not.
It is easy to see that in general, $\mathcal{T}(\id,r) = \id$ for all $r$ and therefore $\id$ is always toppleable.
The following symmetry property of toppling dynamics can be easily seen by studying what happens during a single toppling move.

\begin{prop}
\label{prop:sym}
Suppose $n \geq 3$ is odd, $r \in [n+1]$ and $\pi = (\pi_1,\dots,\allowbreak \pi_n) \in S_n$. Let 
$\hat\pi = (n+1-\pi_n,\dots,n+1-\pi_1)$. Then the toppling dynamics on $\conf{\pi}{r}$
is isomorphic to that on $\conf{\hat\pi}{n+2-r}$ via the map which reflects configurations about the origin and interchanges chip $i$ with $n+2-i$.

Since $\widehat\id = \id$, $\pi$ is $r$-toppleable if and only if $\hat\pi$ is $(n+2-r)$-toppleable.
\end{prop}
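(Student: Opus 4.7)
The plan is to exhibit an explicit involution $\Phi$ on configurations in $L_n$ that (i) sends $\conf{\pi}{r}$ to $\conf{\hat\pi}{n+2-r}$ and (ii) commutes with every single toppling move. Because $\Phi$ also preserves the multiplicity of chips at each site, these two facts imply that the entire stochastic trajectory of one system maps under $\Phi$ to that of the other, yielding the claimed isomorphism; combined with \cref{prop:determ}(1) one then reads off the toppleability equivalence. I would define $\Phi$ by sending a chip labeled $i$ at position $p$ to a chip labeled $n+2-i$ at position $-p$. Since $n$ is odd, $L_n$ is symmetric about the origin, so $\Phi$ does map $L_n$ to itself.

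For step (i), the key observation is the interplay between the shift-by-one of labels $\geq r$ in the construction of $\conf{\pi}{r}$ and the involution $i \mapsto n+2-i$. A label $\pi_j < r$ is unshifted, and $i \mapsto n+2-i$ sends it to $n+2-\pi_j > n+2-r$; a label $\pi_j \geq r$ is shifted to $\pi_j+1$, and the involution then sends it to $n+1-\pi_j < n+2-r$. Substituting $\hat\pi_j = n+1-\pi_{n+1-j}$ and tracking the reflection of positions, a direct computation shows these are precisely the entries of $\conf{\hat\pi}{n+2-r}$, while the added chip $r$ at the origin maps to the added chip $n+2-r$ at the origin.

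For step (ii), suppose position $a$ contains chips $\alpha < \beta$. The toppling move sends $\alpha$ to $a-1$ and $\beta$ to $a+1$, so applying $\Phi$ afterwards places chips $n+2-\alpha$ at $-a+1$ and $n+2-\beta$ at $-a-1$. Conversely, applying $\Phi$ first puts chips $n+2-\beta < n+2-\alpha$ at position $-a$; toppling then sends the smaller chip $n+2-\beta$ to $-a-1$ and the larger chip $n+2-\alpha$ to $-a+1$, giving the same outcome. Finally, the sorted terminal configuration $\id$ is manifestly fixed by $\Phi$, so $\mathcal{T}(\pi,r)=\id$ if and only if $\mathcal{T}(\hat\pi,n+2-r)=\id$. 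The only slightly delicate point of the proof is the bookkeeping for the shift-by-one of labels in step (i); step (ii) is a purely local calculation.
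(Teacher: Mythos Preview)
Your proposal is correct and follows exactly the approach the paper indicates: the paper simply asserts that the symmetry ``can be easily seen by studying what happens during a single toppling move'' and gives no further details, while you carry out precisely that verification (your step (ii)) and additionally check the matching of initial configurations (your step (i)). There is nothing to compare---you have written out the proof the paper leaves to the reader.
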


Let $t_r(n)$ be the number of $r$-toppleable permutations and $t(n)$ be the number of toppleable permutations in $S_n$.
For example, $t_1(3) = t_4(3) = 4$ since there are four $1$-toppleable permutations, namely $123$, $213$, $132$ and $231$, as well as four $4$-toppleable permutations, namely $123$, $213$, $132$ and $312$, in $S_3$.
The common permutations among these turn out also to be $2$- and $3$-toppleable and hence $t(3) = 3$.
Data for $t_r(n)$ for small values of $r$ and $n$ is given in \cref{tab:data-rtopp}.
As expected from \cref{prop:sym}, $t_i(n) = t_{n+2-i}(n)$ for $n$ odd.

\begin{table}[htbp!]
\begin{tabular}{c|ccccccccc}
\hline
$n$ \textbackslash $r$ & 1 & 2 & 3 & 4 & 5 & 6 & 7 & 8 & 9 \\
\hline
3 & 4 & 3 & 3 & 4 \\
4 & 14 & 10 & 7 & 7 & 8 \\
5 & 46 & 38 & 31 & 31 & 38 & 46 \\
6 & 230 & 184 & 146 & 115 & 115 & 130 & 146 \\
7 & 1066 & 920 & 790 & 675 & 675 & 790 & 920 & 1066 \\
8 & 6902 & 5836 & 4916 & 4126 & 3451 & 3451 & 3842 & 4264 & 4718 \\
\hline
\end{tabular}
\vspace{0.5cm}
\caption{The number of $r$-toppleable permutations, $t_r(n)$, for $3 \leq n \leq 8$.}
\label{tab:data-rtopp}
\end{table}

Recall that an {\em excedance} of a permutation $\pi$ is any position $i$ such that $\pi_i > i$. The positions at which there are excedances for $\pi$ is called the {\em excedance set} of $\pi$.
For example, the permutations $\rho$ and $\sigma$ considered in \eqref{eg-conf} have excedance sets
$\{1,3\}$ and $\{1,2\}$ respectively. 
There have been a lot of studies of the excedance statistic on permutations.
What will be relevant to us is the study of permutations whose excedance set is $\{1,\dots,k\}$ for some $k$. This was initiated by Ehrenborg and Steingr\'imsson~\cite{ehrenborg-steingrimsson-2000}, who gave a formula for the number $a_{n,k}$ of such permutations in $S_n$. 
The bivariate exponential generating function of $a_{r+s,s}$ is given by the explicit formula~\cite[Theorem 3.1]{ehrenborg-clark-2010}
\begin{equation}
\label{egf-exc}
\sum_{r,s \geq 0} a_{r+s,s} \frac{x^r}{r!} \frac{y^s}{s!} 
= \frac{e^{-x-y}}{(e^{-x} + e^{-y} - 1)^2}.
\end{equation}

\begin{thm}
\label{thm:topp=exc}
For all $n$, the number of toppleable permutations in $S_n$ satisfies $t(n) = t_{\floor{n/2}+1}(n) = t_{\floor{n/2}+ 2}(n)$. Furthermore, this number is given by $t(n) = a_{n,\floor{(n-1)/2}}$.
\end{thm}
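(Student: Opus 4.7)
The theorem makes two assertions: the equality $t(n) = t_{\lfloor n/2 \rfloor + 1}(n) = t_{\lfloor n/2 \rfloor + 2}(n)$, and the formula $t(n) = a_{n, \lfloor(n-1)/2\rfloor}$. The bound $t(n) \le \min_r t_r(n)$ is immediate from \cref{def:topp}, so the equality part reduces to the statement: if $\pi \in S_n$ is $r$-toppleable for $r = \lfloor n/2 \rfloor + 1$ or $r = \lfloor n/2 \rfloor + 2$, then $\pi$ is $r$-toppleable for every $r \in [n+1]$.

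My plan is to extract an explicit combinatorial characterization of $r$-toppleability in terms of inequalities on the entries of $\pi$. The central tool is the rule that in every toppling the smaller chip moves left and the larger moves right. I would first fix a convenient canonical order of topplings (legitimate because the final configuration is deterministic by \cref{prop:determ}(1))---for instance, always toppling the position nearest the origin, sweeping outward. This lets me recursively track each chip's trajectory: the position at which a chip $\alpha$ first becomes involved in a toppling is determined by $\pi$ and $r$, and its final position is controlled by how many larger and smaller chips it subsequently meets. Combined with the unlabeled odometer (counting how many times each position topples, which is determined by the initial heights via a discrete Laplacian), this should allow computation of every chip's final position as an explicit function of $(\pi, r)$.

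From this trajectory analysis I would read off a system of inequalities on $\pi_1, \ldots, \pi_n$ (parameterized by $r$) equivalent to the final configuration being sorted. I expect the structure to be $\pi_i \ge f(i, r)$ for $i$ in the ``left half'' and $\pi_i \le g(i, r)$ for $i$ in the ``right half'', with thresholds depending monotonically on $r$. The crucial step is to show that these constraints are strictest at $r = \lfloor n/2 \rfloor + 1$ and $r = \lfloor n/2 \rfloor + 2$, yielding $t(n) = t_{\lfloor n/2\rfloor + 1}(n) = t_{\lfloor n/2 \rfloor + 2}(n)$. For odd $n$, \cref{prop:sym} already pairs the two middle values, but this only reduces one case to the other; for even $n$ the symmetry does not apply and both cases need direct handling.

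For the enumeration, the plan is to show that at middle $r$ the inequalities collapse, after a suitable shift of labels reflecting the relabeling that defines $\conf{\pi}{r}$, to exactly $\pi_i > i$ for $i \le \lfloor(n-1)/2\rfloor$ and $\pi_i \le i$ otherwise---the defining condition of permutations enumerated by $a_{n, \lfloor(n-1)/2\rfloor}$. This gives a direct bijection. The main obstacle I anticipate is the trajectory analysis itself: while the unlabeled odometer is straightforward, tracking which labeled chip ends up where when two chips meet at an intermediate site requires delicate inductive bookkeeping. A possible shortcut would be to identify a monovariant on pairs of chips preserved by every toppling move, from which the threshold inequalities can be extracted without computing full trajectories.
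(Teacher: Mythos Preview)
Your overall architecture matches the paper: a structural inequality characterization of toppleability plus a bijection to permutations with prescribed excedance set. But two points of your plan diverge from the paper in ways worth flagging.

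\textbf{Monotonicity.} You propose to characterize $r$-toppleability by a system of inequalities \emph{for every} $r$, and then compare the systems to show the middle values are the most restrictive. The paper does not do this, and it is not clear such clean characterizations exist away from the middle: the inserted chip $r$ is no longer the median, so the left/right analyses become asymmetric. Instead, the paper proves monotonicity (\cref{thm:certi}) by a \emph{coupling} argument: $\conf{\pi}{r}$ and $\conf{\pi}{r-1}$ differ only in the positions of chips $r-1$ and $r$, so one runs the two processes in parallel and shows, via \cref{lem:final-move} on the direction of each chip's last move, that $r-1$ and $r$ must eventually occupy the same site, after which the processes coincide. This yields the nesting of $r$-toppleable sets directly, and the equality $t_{\lfloor n/2\rfloor+1}(n)=t_{\lfloor n/2\rfloor+2}(n)$ falls out of the same lemma (the two central chips topple together in the final move). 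Only \emph{after} this is the inequality characterization (\cref{thm:topp-struct}) established, and only for the middle $r$, using the canonical ``pass'' order you allude to.

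\textbf{The bijection.} Your expectation that the middle-$r$ inequalities reduce to the excedance condition ``after a suitable shift of labels'' understates what is needed. The characterization in \cref{thm:topp-struct} is $\pi_i\le m+i$ on the left half and $\pi_i\ge i-m$ on the right half (note the inequality directions are opposite to what you wrote). Turning this into the excedance condition $\sigma_i>i$ exactly on $\{1,\dots,m\}$ requires the map of \cref{lem:bij}, which reverses positions within each half \emph{and} complements values: $\sigma_i=2m+2-\pi_{m+1-i}$ for $i\le m$, and similarly on the other half. It is an explicit bijection, but not a mere relabeling shift.
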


Using \eqref{egf-exc}, de Andrade, Lundberg and Nagle~\cite[Theorem 1.2]{de-andrade-etal-2015} obtained the asymptotic formula,
\[
t(n) = \frac{1}{2 \log 2 \sqrt{1 - \log 2} + o(1)}   \frac{n!}{(2 \log 2)^n}  .
\]

These numbers are given by the central diagonal chips in the triangle \cite[Sequence A136126]{OEIS} of the OEIS. Notice that the latter triangle is symmetric and therefore, the answer is unambiguous for even $n$.

To state our next result, we recall the notion of acyclic orientations.
For any simple, undirected graph, an {\em orientation} is an assignment of arrows to the edges. An {\em acyclic orientation} (AO) is an orientation in which there is no directed cycle. It is easy to see that every graph has an acyclic orientation and every acyclic orientation has at least one {\em source} (vertex with no incoming arrows) and one {\em sink}
(vertex with no outgoing arrows).
An {\em acyclic orientation with a unique sink} (AUSO), also known as the {\em Ursell function} is an acyclic orientation with exactly one sink.
Stanley showed that the number of acyclic orientations of any graph (up to sign) is given by the chromatic polynomial of the graph evaluated at $-1$~\cite{stanley}. A related result of Greene and Zaslavsky~\cite{greenezaslavsky} is that the number of acyclic orientations with a unique sink is independent of the sink and equal to (again up to sign) the linear coefficient of the chromatic polynomial.

Our focus here will be on AUSOs of complete bipartite graphs $K_{m,n}$.

\begin{thm}
\label{thm:topp=ursell}
The number of toppleable permutations in $S_n$, $t(n)$, is the same as the number of acyclic orientations with a fixed unique sink of $K_{\ceil{n/2},\floor{n/2}+1}$.
\end{thm}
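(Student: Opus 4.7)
The plan is to combine \cref{thm:topp=exc} with an enumeration of AUSOs of the relevant complete bipartite graph. By \cref{thm:topp=exc}, $t(n) = a_{n,k}$ for $k = \lfloor(n-1)/2\rfloor$. Since the parts of $K_{\lceil n/2\rceil, \lfloor n/2\rfloor+1}$ have sizes $k+1 = \lceil n/2\rceil$ and $n-k = \lfloor n/2\rfloor+1$, the theorem reduces to showing that $a_{n,k}$ equals the number $N(k+1,n-k)$ of AUSOs of $K_{k+1,n-k}$ with a fixed sink. I would aim to prove the more general identity $N(r,s) = a_{r+s-1,r-1}$ for all $r,s\ge 1$, from which the theorem follows by specialisation.

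One direct approach is enumerative. The chromatic polynomial of $K_{r,s}$ is
\[
P(K_{r,s},x) \;=\; \sum_{j=1}^{r} S(r,j)\, x^{\underline{j}}\, (x-j)^{s},
\]
and the Greene--Zaslavsky theorem identifies $N(r,s)$ with $|[x^1] P(K_{r,s}, x)|$. Extracting the linear coefficient gives
\[
N(r,s) \;=\; \sum_{j=1}^{r} S(r,j)\,(-1)^{r-j}(j-1)!\, j^{s},
\]
and small-case checks agree with \cref{tab:data-rtopp} (e.g.\ $N(2,2)=3$, $N(2,3)=7$, $N(3,3)=31$). The task then reduces to verifying $N(r,s) = a_{r+s-1,r-1}$, which can be approached by coefficient extraction in the Ehrenborg--Clark EGF \eqref{egf-exc} or by establishing a common recurrence in $r$ and $s$.

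An alternative is to construct a direct bijection between permutations of $[r+s-1]$ with excedance set $\{1,\ldots,r-1\}$ and AUSOs of $K_{r,s}$ with fixed sink. Labelling the parts as $A = \{a_0, a_1, \ldots, a_{r-1}\}$ (with $a_0$ the sink) and $B = \{b_{r},\ldots,b_{r+s-1}\}$, one identifies $a_i$ ($i\ge 1$) with the $i$-th excedance position of $\pi$ and $b_j$ with a non-excedance position, then defines an orientation by comparing $\pi$-values across edges (with $\pi_0 := 0$, forcing $a_0$ to be a sink). Acyclicity is immediate from the linear order on values, and one must verify uniqueness of the sink and bijectivity via an inverse.

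The main obstacle in the bijective approach is that the naive value-comparison rule is not faithful: with $n=3$ the rule $a_i \to b_j \Leftrightarrow \pi_i > \pi_j$ sends both $312$ and $321$ to the same orientation, and a position-comparison variant can even produce a second sink when a small-valued vertex $a_v$ ends up with no outgoing edges. A correct bijection must encode the relative order of values at non-excedance positions, likely via an iterative construction stratified by the AUSO's layers from the sink outward. In the enumerative approach the difficulty is instead the algebraic matching of the Stirling-number sum with $a_{r+s-1,r-1}$, which requires careful handling of signs and the off-diagonal relation $(n,k)=(r+s-1,r-1)$ when extracting coefficients from \eqref{egf-exc}.
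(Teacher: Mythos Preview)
Your enumerative route is sound and in fact already complete: the expression you derive,
\[
N(r,s)=\sum_{j=1}^{r}(-1)^{r-j}S(r,j)\,(j-1)!\,j^{s},
\]
coincides \emph{literally} with the Ehrenborg--Steingr\'imsson closed form
$a_{r+s-1,r-1}=\sum_{i=1}^{r}(-1)^{r-i}\stirling{r}{i}\,i!\,i^{s-1}$
(quoted in the paper at the start of \cref{sec:ao-enum}) once you rewrite $i!\,i^{s-1}=(i-1)!\,i^{s}$. So the ``obstacle'' you flag for the enumerative approach is a one-line identification, not a genuine difficulty; you were closer to done than you realised.

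The paper takes a different, bijective route, and the bijection is not the value-comparison map you sketched. It works instead with $\mathcal{R}(m,n)$, the AOs of $K_{m,n}$ having no sink in the left part (in easy bijection with AUSOs of $K_{m+1,n}$), and sends $\sigma\in\mathcal{E}(m+n,m)$ to an orientation by writing the cycle decomposition canonically (each cycle led by its minimum, cycles ordered right-to-left by minimum), erasing the parentheses, and reading the resulting word as a topological sort of $K_{m,n}$. The inverse recovers the cycle structure from a canonical topological sort via a tie-breaking rule on incomparable vertices. This Foata-style passage through the cycle word is precisely the idea your bijective attempt was missing: it is what prevents the collisions you noticed (e.g.\ $312$ versus $321$), because distinct permutations have distinct canonical cycle words.

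What each buys: your enumerative argument is short and leans on two known formulas, but it is opaque and imports the Ehrenborg--Steingr\'imsson identity as a black box. The paper's bijection is self-contained, explains structurally why the two counts agree, and its topological-sort framework is reused verbatim in \cref{sec:ao-enum} to handle general complete multipartite graphs.
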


\section{Toppleable permutations and excedances}
\label{sec:topp=exc}

In this section, we will prove \cref{thm:topp=exc}. 
To make the presentation cleaner, we state the results separately for odd and even $n$ for the most part. This will avoid the presence of floors and ceilings all over the place.
We begin with a monotonicity result. 

\begin{thm}
\label{thm:certi}
Let $n = 2m+1$ and $\pi \in S_n$. 
\begin{enumerate}
\item Suppose $2 \leq r \leq m+1$. Then $\pi$ is $(r-1)$-toppleable
if $\pi$ is $r$-toppleable. 

\item Suppose $m+2 \leq r \leq 2m$. Then $\pi$ is $(r+1)$-toppleable
if $\pi$ is $r$-toppleable. 

\item For $r = m + 1$, $\pi$ is $r$-toppleable if and only if $\pi$ is $(r + 1)$-toppleable. 
\end{enumerate}

Let $n = 2m$ and $\pi \in S_{n}$. 
\begin{enumerate}
    \item Suppose $2 \leq r \leq m + 1$. Then $\pi$ is $(r-1)$-toppleable if $\pi$ is $r$-toppleable. 
    
    \item Suppose $m + 2 \leq r \leq n$. Then $\pi$ is $(r+1)$-toppleable if $\pi$ is $r$-toppleable. 
    
    \item For $r = m + 1$, $\pi$ is $r$-toppleable if and only if $\pi$ is $(r + 1)$-toppleable. 
\end{enumerate}
\end{thm}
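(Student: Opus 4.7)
The plan is to establish these monotonicity implications via a coupling of the toppling dynamics on $\conf{\pi}{r}$ and $\conf{\pi}{r\pm 1}$; I will describe the $r$-to-$r-1$ case in detail, with the $r$-to-$r+1$ case symmetric and the even case $n = 2m$ handled analogously. The starting observation is that $\conf{\pi}{r}$ and $\conf{\pi}{r-1}$ differ only in the labels carried by two specific physical chips: the chip at the origin carries $r$ in the former and $r-1$ in the latter, while the chip at position $\pi^{-1}(r-1) - \lfloor (n-1)/2 \rfloor - 1$ carries the complementary label. All other chips carry identical labels in both initial configurations.

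Using the abelian property from \cref{prop:determ}(1), I would fix a common toppling sequence and run both dynamics in lockstep. Since every topple decision depends only on the relative order of the two labels at the fired site, and since $r-1$ and $r$ are consecutive integers, every topple not simultaneously involving the two ``special'' chips (call them $A$ and $B$) produces identical physical movement in both processes. Consequently, the unlabeled occupancy at every site stays synchronized throughout, and $A, B$ follow identical physical trajectories until the first time they share a site---a \emph{meeting}. At each meeting the physically smaller-labeled chip goes left, which is a different physical chip in each process; hence each meeting swaps the positions of $A$ and $B$ between the two dynamics. Combining these observations gives the dichotomy: after an odd number of meetings, $A$ and $B$ end at swapped positions in the two final configurations, and combined with the swapped labels this places labels $r-1$ and $r$ at the same absolute positions in both final states (so sortedness is preserved); after an even number of meetings, the physical positions agree but the labels $r-1, r$ appear transposed in $\conf{\pi}{r-1}$ compared to $\conf{\pi}{r}$ (so sortedness of one forces the other to have $r-1, r$ transposed and thus not sorted).

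The main obstacle is then the parity claim: whenever $\conf{\pi}{r}$ topples to the sorted configuration, $A$ and $B$ must meet an odd number of times. The key structural input is that in $\conf{\pi}{r}$ every meeting ends with $A$ (carrying the larger label) to the right of $B$, so the order between $A$ and $B$ switches only at the first meeting and is preserved thereafter. Combined with the requirement that the sorted final positions of labels $r-1$ and $r$ be adjacent and lie on a specific side of the empty final site (both left of the origin in part (1), both right in part (2), on opposite sides in part (3)), a careful tracking of the initial positions of $A, B$ and the prescribed net displacements they must undergo should force the meeting count to be odd. Part (3) acquires its biconditional strength because the symmetry between the two sides of the empty final site makes the parity argument work in both directions, whereas parts (1) and (2) yield only one-way implications since the reverse parity claim can fail. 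The even case $n = 2m$ follows by the same scheme, with the empty final site shifted from the origin to position $1$.
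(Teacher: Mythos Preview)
Your coupling framework is the right starting point and matches the paper's: the two initial configurations differ only in the labels $r-1,r$ carried by two chips, every topple not involving both of them moves identically in the two processes, and once the two chips occupy the same site the processes coalesce. But your parity dichotomy is incorrect. After the \emph{first} meeting, the label configurations of the two processes are already identical (label $r$ sits at the same site in both, and likewise $r-1$), so from that moment on the two processes evolve identically as \emph{labeled} systems. In particular, any subsequent meeting of $A$ and $B$ happens simultaneously in both processes and leaves the label configurations identical; it does \emph{not} ``swap back.'' The correct dichotomy is therefore ``zero meetings'' versus ``at least one meeting,'' not even versus odd. Your sentence ``the order between $A$ and $B$ switches only at the first meeting and is preserved thereafter'' in fact points to exactly this: once they meet, nothing further can decouple the label configurations.

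With the dichotomy corrected, the entire content of the theorem is the claim that chips $r-1$ and $r$ must share a site at some point during the toppling of $\conf{\pi}{r}$ (assuming $\pi$ is $r$-toppleable and $r$ is in the stated range). Your proposal does not prove this; ``a careful tracking of initial positions and net displacements should force'' it is not an argument, and in fact net-displacement bookkeeping alone is not enough, because $r$ can move arbitrarily far right before returning. The paper supplies the missing substance via a separate lemma (\cref{lem:final-move}) showing that, when $\conf{\pi}{r}$ sorts, each chip $k\le\lfloor n/2\rfloor+1$ makes its final move to the left, and then runs an induction on the distance $\ell$ between $r-1$ and $r$: if $r$ topples left the distance drops, and if $r$ topples right one argues (using the lemma to guarantee $r$ eventually comes back) that the distance cannot grow, reducing to the previous case. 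Part~(3) falls out directly from part~(3) of the same lemma, since $r=\lfloor n/2\rfloor+1$ and $r+1$ are precisely the two chips involved in the very last topple. You should either reproduce this induction or supply a genuinely different mechanism guaranteeing a meeting.
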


To see that the converse of the first statement is not true, consider the following example with $m=2$ and $r=3$. When $\pi = 24135$, 
$\conf{\pi}3 = (2, 5, (1,3), 4, 6)$ and $\conf{\pi}2 = (3, 5, (1,2), 4, 6)$ couple eventually. But 
when $\pi = 13452$, $\conf{\pi}3  = (1, 4, (3, 5), 6, 2)$ and $\conf{\pi}2 = (1, 4, (2, 5), 6, 3)$ never couple and we have
$\mathcal{T}(\conf{\pi}2) = 123456$, but $\mathcal{T}(\conf{\pi}3) = 132456$.

To prove \cref{thm:certi}, we need a lemma: 
\begin{lem}
\label{lem:final-move}
Let $\pi \in S_n$ and suppose $\pi$ is $r$-toppleable. Then 
\begin{enumerate}
    \item for each $1 \leq k \leq \floor{\frac{n}{2}}+1$, the final move of chip $k$ when toppling $\pi^{(r)}$ is to the left;
    
    \item for each $\floor{\frac{n}{2}}+2 \leq k \leq n + 1$, the final move of chip $k$ when toppling $\pi^{(r)}$ is to the right.
    
    \item in the final move, chips $\floor{\frac{n}{2}}+1$ and $\floor{\frac{n}{2}}+2$ topple to their correct positions. 
\end{enumerate}
\end{lem}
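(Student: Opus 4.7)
My plan is to handle part (3) first by examining the very last toppling move in the process. The process ends at $\id$, which leaves exactly one empty site in $L_n$—namely $0$ if $n$ is odd, $1$ if $n$ is even. The final toppling at some site $i$ must send its two chips outward to $i-1$ and $i+1$; both of these must have been empty just beforehand, since otherwise \cref{prop:determ} would be violated (the recipient would end up holding more than one chip, contradicting that the process has terminated). The unique empty site in the final state must therefore be $i$, forcing $i \in \{0,1\}$ according to the parity of $n$, and the two chips deposited at $i\pm 1$ can be read off directly from $\id$: they are $\lfloor n/2 \rfloor + 1$ (moving left) and $\lfloor n/2 \rfloor + 2$ (moving right). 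This proves (3), and simultaneously settles the boundary cases $k = \lfloor n/2 \rfloor + 1$ of (1) and $k = \lfloor n/2 \rfloor + 2$ of (2).

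For the remaining cases of (1), I would induct on $k$ from $1$ up to $\lfloor n/2 \rfloor$. The base case $k=1$ is immediate, since chip $1$ is the smallest participant in every toppling it joins and therefore moves only leftward. For the inductive step ($2 \le k \le \lfloor n/2 \rfloor$), assume by the induction hypothesis that chip $k-1$ either never moves or ends with a leftward move (landing at $p_{k-1} = p_k - 1$ from $p_k$), and suppose for contradiction that chip $k$'s last move, at some time $T_k$, is rightward. Then just before $T_k$ chip $k$ sits at $p_k - 1$ alongside some $j < k$, and this toppling sends $k$ to $p_k$ (its permanent resting place) and $j$ to $p_k - 2$.

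The contradiction comes from tracking chip $k-1$'s last-move time $T_{k-1}$. If $k-1$ never moves, or $T_{k-1} \le T_k$, then chip $k-1$ already sits at $p_k - 1$ at time $T_k$; the at-most-two-chips invariant from \cref{prop:determ} then forces the $T_k$-toppling to pair $k-1$ with $k$, but this sends $k-1$ to $p_k - 2$, contradicting $p_{k-1} = p_k - 1$. If $T_{k-1} > T_k$, then at time $T_{k-1}$ chip $k$ is still parked at $p_k$ and chip $k-1$ is at $p_k$ (poised for its final leftward step); the same argument pairs $(k-1,k)$ at $p_k$, sending $k$ to $p_k + 1$ and contradicting $T_k$ being $k$'s last move. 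Part (2) follows by the symmetric downward induction on $k$, with base case chip $n+1$ (the largest, hence moving only rightward).

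I expect the main obstacle to be purely organizational: checking that in each crucial toppling the at-most-two-chips invariant really does identify the pair as $(k-1,k)$ (or $(k,k+1)$ in the symmetric case) so that the move is forced, and treating the edge cases (chip $k-1$ never moving, or $T_{k-1}$ coinciding with $T_k$) uniformly with the main timing cases. Once that is pinned down, the geometric fact that consecutive chips in the left block satisfy $p_{k-1} = p_k - 1$ does most of the real work.
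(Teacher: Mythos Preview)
Your proof is correct and follows essentially the same route as the paper's. Both arguments handle (3) by identifying the unique empty site in the final configuration and reading off the two chips deposited there in the last toppling; both prove (1) by induction on $k$, using that once chip $k-1$ has settled at $p_{k-1}=p_k-1$ no further toppling can occur there. The only cosmetic difference is in the inductive step: the paper argues directly that chip $k$'s final arrival at $p_k$ cannot be from $p_k-1$ (since that site is ``blocked'' after $k-1$ settles), whereas you phrase the same idea as a contradiction split on whether $T_{k-1}\le T_k$ or $T_{k-1}>T_k$. Your version is slightly more explicit about why the timing rules out the rightward case, but the underlying mechanism is identical.
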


\begin{proof}
To prove (1), we use induction on $k$. First, since $\conf{\pi}{r}$ topples to the identity, chip 1 ends in the leftmost position of $L_n$. Then, Proposition \ref{prop:determ}(2) ensures that the final move of 1 into this position must be to the left, since otherwise the chip would have to lie outside $L_n$ directly before its final move. 

Now, using the induction hypothesis, we have that the final move of chip $k-1$ is to the left, say into position $p-1$. This leaves an empty space in position $p$. Moreover, after this final move, no chip may land on position $p-1$. We know, since $\pi$ is $r$-toppleable, that position $p$ must eventually hold chip $k$. And since chip $k$ cannot land on position $p-1$, it must make its final move to the left from position $p+1$. 

(2) is proved similarly. For (3), recall Proposition \ref{prop:determ}(3) gives that the final configuration contains no chips at position 0 (resp.\ 1) if $n$ is odd (resp.\ even). Since $\conf{\pi}{r}$ topples to the identity, we have $\floor{\frac{n}{2}}+1$ directly left of this empty position, and $\floor{\frac{n}{2}}+2$ directly to the right. It is clear that the only way to arrive at this configuration is for the final topple to have occurred at position 0 (resp.\ 1) containing these two chips.
\end{proof}

\begin{proof}[Proof of Theorem \ref{thm:certi}]

Our strategy for the proof is as follows. Suppose $\pi \in S_n$ is $r$-toppleable. 
Then, the only difference between $\conf{\pi}{r}$ and $\conf{\pi}{r-1}$ is that the positions of $r-1$ and $r$ are interchanged.
By definition, $r$ is positioned at the origin of $L_n$ in $\conf{\pi}{r}$ and let $j$ be the position of $r-1$ in  $\conf{\pi}{r}$.
If $j = 0$, then  $\conf{\pi}{r} =  \conf{\pi}{r-1}$ and the result trivially holds. If not, there are two possibilities.
Either $j > 0$ or $j < 0$. We might need different arguments in both cases.

At each step of the toppling procedure, $\conf{\pi}{r}$ and $\conf{\pi}{r-1}$ continue to differ only in their 
positions of $r-1$ and $r$. This will be the case until we reach the point when $r-1$ and $r$ are
at the same position. At this point, the toppling procedure is coupled and the final result is identity.
The only problem with this argument is that we could have reached the final result without ever being 
coupled. 

Now we assume that $2 \leq r \leq m +1$.
For the proof of statement (1), we will track the positions of $r-1$ and $r$ as time evolves only in $\conf{\pi}{r}$. If we can show that there is a time when $r-1$ and $r$ are at the same site, then we are done since $\conf{\pi}{r}$ and $\conf{\pi}{r-1}$ will be coupled.

If $j > 0$, that means $r-1$ is to the right of $r$ in $\conf{\pi}{r}$. But we know that eventually $r-1$ will end up to the left of $r$, since $\pi$ is $r$-toppleable. Therefore, there will necessarily be a time when $r-1$ and $r$ are at the same site.

If $j < 0$, then we perform induction on the difference $\ell$ in the locations of $r-1$ and $r$, assuming that $r$ is not a singleton and no positions between $r$ and $r-1$ are vacant. If $\ell = 0$, then we are done as argued before. Suppose that $\ell \geq 1$, and for induction suppose that if $r$ is $\ell-1$ positions to the right of $r-1$ with $r$ not a singleton and no vacancies between them, then they will couple. 

Now, for distance $\ell$, the initial situation is
\[
\begin{matrix}
&&  & r & & \\
\dots & r-1 & \underbrace{\dots}_{\ell-1} & a & b & \dots
\end{matrix},
\]
where $a, b \in [n+1]$. We topple the site containing $a$ and $r$, then there are now two sub-cases.

\begin{enumerate}[(1)]

\item
\label{it:r<a}
If $r < a$, then we land in either
\[
\begin{matrix}
& r & & a & \\
\dots & r-1 & \blank & b & \dots
\end{matrix}
\]
if $\ell = 1$, or in
\[
\begin{matrix}
&&& r & & a & \\
\dots & r-1 & \underbrace{\dots}_{\ell-2} & x & \blank & b & \dots
\end{matrix}
\]
if $\ell > 1$ and $x$ is the chip immediately to the left of $a$ initially.
In the former case, $r-1$ and $r$ are already at the same site, and hence coupled. In the latter, we are in a similar situation as what we started with, but $\ell$ has reduced by $1$. Therefore, we are done by the induction assumption.

\item
\label{it:r>a}
Suppose $a < r$ and hence $a < r-1$. Then, in the first step,
we arrive in
\[
\begin{matrix}
&&& a & & r & \\
\dots & r-1 & \underbrace{\dots}_{\ell-2} & x & \blank & b & \dots
\end{matrix},
\]
where again $x$ is the chip immediately to the left of $a$ initially, and $x = r-1$ if $\ell = 1$. Note in particular that \cref{lem:final-move} ensures that when $r$ moves to the right, that is not its final move. Therefore, there must eventually be some $b$ (maybe after toppling some chips to the right of $r$) which lands in the same position as $r$ as shown above.

Now, we only perform topplings on the left half until a particle, say $y$, lands on the same site as $r-1$. Ignoring all the other particles, we then have
\[
\begin{matrix}
&& y && & r & \\
\dots & z & r-1 & \blank & \underbrace{\dots}_{\ell-1} & b & \dots
\end{matrix}.
\]
The key observation is that $y < r-1$. Therefore, at the next stage, we will arrive at
\[
\begin{matrix}
& y &&& & r & \\
\dots & z & \blank & r-1  & \underbrace{\dots}_{\ell-1} & b & \dots
\end{matrix}.
\]
Therefore, we are back in the same situation as before with $\ell$ sites in between, except that both $r-1$ and $r$ are shifted to the right. Now, if $b > r$, we are back in \cref{it:r<a} and we are done by induction. If not, we repeat this argument and end up again with $\ell$ sites between $r-1$ and $r$. Since $r$ will have to move to the left eventually (again, by \cref{lem:final-move}), we will arrive in the situation with \cref{it:r<a} and the result is proved by induction.
\end{enumerate}

The proof of statement (2) is similar---it follows directly by symmetry in the odd case, and by a similar argument to the above in the even case. The proof of (3) follows from \cref{lem:final-move}(3): chips $r = \floor{\frac{n}{2}} + 1$ and $r + 1$ must couple in the final move. 
\end{proof}

We now move towards a structural characterization for toppleable permutations.
To do so, we will find it useful to define the notion of a {\em pass} for a fixed sequence of topplings; recall that the sequence of topplings does not matter by \cref{prop:determ}(1). 

Let $\pi \in S_{n}$. Then, we denote the tuple counting the number of chips at each site of $L_n$ in $\conf{\pi}{r}$ by $\conf{\pi}{r} = (\blank,1,\dots,1,\hat 2,1,\dots,1,\blank)$, where we have marked the origin with a hat. Equivalently, this is the corresponding unlabeled configuration of $\conf{\pi}{r}$. Let us consider what happens to $\conf{p}{r}$ after the first few topplings:
\begin{align*}
\conf{p}{r} 
\to & (\blank,1,\dots,1,1,2, \hat\blank ,2,1,1,\dots,1,\blank) \\
\to & (\blank,1,\dots,1,2,\blank ,\hat 2,\blank ,2,1,\dots,1,\blank)\\
\to & (\blank,1,\dots,2,\blank ,1,\hat 2,1,\blank ,2,\dots,1,\blank).
\end{align*}
At this point, we leave the origin unchanged and start to topple the vertices with 2 chips both on the left and right simultaneously, until we reach the end. We then arrive at the configuration $(1,\blank,1,\dots,1,\hat 2,1,\allowbreak\dots, 1,\blank,1)$. 
Now, the extremal points cannot be modified by any further topplings and are fixed. We call this sequence of topplings the {\em first pass}. This consists of $n$ individual topplings. Similarly, the {\em second pass} will be initiated begin by toppling the origin in a similar way, and we will end up with  $(1,1,\blank,1,\dots,1,\hat 2,1,\dots,1,\allowbreak \blank,1,1)$. We continue this way. 
If $n$ is odd, then we see that after $(n+1)/2$ passes, the configuration will freeze leaving the origin empty.
If $n$ is even, then after $n/2-1$ moves, we end up with
$(1,\dots,1,\blank, \hat 2,1,\blank,1,\dots,1 \allowbreak)$. We then declare the $(n/2)$'th pass to be the one that topples at the origin and site 1, freezing the configuration leaving site 1 empty.

We make two elementary observations about these passing moves. First, every chip between the two vacant sites topples at least once in every pass. Second, if $\pi \in S_{2m+1}$ is $r$-toppleable, then for $1 \leq i \leq m+1$, $i$ and $2m+2-i$ get fixed in their correct positions at the end of the $i$'th pass.
For example, the result of passes on the $\conf{\rho}2$ and $\conf{\sigma}2$ from \eqref{eg-conf} are as follows:
\begin{align*}
&\conf{\rho}2 
\underset{\text{pass}}{\overset{\text{first}}{\longrightarrow}}
\begin{matrix}
&& 2 \\
1 & \blank & 4 & 3 & \blank & 5
\end{matrix} 
\underset{\text{pass}}{\overset{\text{second}}{\longrightarrow}}
\begin{matrix}
1 & 2 & 3 & \blank & 4 & 5
\end{matrix}, \\
&\conf{\sigma}2 
\underset{\text{pass}}{\overset{\text{first}}{\longrightarrow}}
\begin{matrix}
&&& 2 \\
1 & \blank & 3 & 6 & 4 & \blank & 5
\end{matrix} \\
&\underset{\text{pass}}{\overset{\text{second}}{\longrightarrow}}
\begin{matrix}
&&& 3 \\
1 & 2 & \blank & 4 & \blank & 6 & 5
\end{matrix}
\underset{\text{pass}}{\overset{\text{third}}{\longrightarrow}}
\begin{matrix}
\\
1 & 2 & 3 & \blank & 4 & 6 & 5
\end{matrix}.
\end{align*}

\begin{lem}
\label{lem:pos1}
If $\pi \in S_{2m+1}$ is toppleable, then $1$ is in position at most $m+1$ in $\pi$. Conversely, if $1$ (resp $n$) is in position at most $m+1$ (at least $m+1$) in $\pi$, then $1$ (resp $n+1$) is in the first
(resp. last) position in $\mathcal{T}(\conf{\pi}{m+1})$.

If $\pi \in S_{2m}$ is toppleable, then $1$ is in position at most  $m$  in $\pi$. Conversely, if $1$ (resp. $2m$) is in position at most $m$ (resp. at least $m$) in $\pi$, then $1$ (resp. $2m + 1$) is in the first (resp. last) position in $\mathcal{T}(\pi^{(m + 1)})$. 
\end{lem}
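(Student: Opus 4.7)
The plan is to reduce both directions of the lemma to a careful analysis of the \emph{first pass} of toppling applied to $\conf{\pi}{m+1}$. The essential structural fact, evident from the pass description preceding the lemma, is that after pass~1 the chip occupying each of the two boundary sites of $L_n$ (namely $-\lfloor (n+1)/2 \rfloor$ and $\lfloor n/2 \rfloor + 1$) is permanently fixed: no subsequent pass topples these sites, and no chip is transported into them either, because for each $i \geq 2$ the pass-$i$ wave reaches only strictly interior positions. Consequently, the chips at the first and last positions of $\mathcal{T}(\conf{\pi}{m+1})$ are determined entirely by pass~1, irrespective of whether $\pi$ is toppleable.

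The heart of the argument is to follow the ``smaller chip swept leftward'' through the first-pass wave. Write $\pi^{\ast}$ for the permutation obtained from $\pi$ by bumping every entry $\geq r = m+1$ by one, and let $\mathrm{mid}$ denote the $\pi$-index initially sitting at the origin, so that $\mathrm{mid} = m+1$ for odd $n = 2m+1$ and $\mathrm{mid} = m$ for even $n = 2m$. Set $s_0 = \min(r, \pi^{\ast}_{\mathrm{mid}})$, and for $k \geq 1$ let $s_k$ denote the smaller of the two chips at site $-k$ at the instant it is toppled during pass~1. The wave topples sites in the order $0, -1, -2, \dots$ and no chip reaches $-k$ before its own first-pass topple except the smaller chip $s_{k-1}$ delivered from the topple at $-(k-1)$, so one obtains
\[
s_k \;=\; \min\bigl(s_{k-1},\ \pi^{\ast}_{\mathrm{mid}-k}\bigr) \;=\; \min\bigl\{\, r,\ \pi^{\ast}_{\mathrm{mid}},\ \pi^{\ast}_{\mathrm{mid}-1},\ \dots,\ \pi^{\ast}_{\mathrm{mid}-k}\,\bigr\}.
\]
The left half of the wave topples all the way to $-m$ when $n = 2m+1$ and terminates at $-(m-1)$ when $n = 2m$; in either case the chip deposited at the first position is $\min\{r, \pi^{\ast}_1, \dots, \pi^{\ast}_{\mathrm{mid}}\}$. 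Because $r = m+1 > 1$ and the shift fixes the label $1$, this minimum equals $1$ exactly when some $\pi_j = 1$ satisfies $j \leq m+1$ (odd case) or $j \leq m$ (even case). This immediately delivers the converse direction for chip~$1$ and, contrapositively, the forward implication: if $\pi$ is toppleable then $\mathcal{T}(\conf{\pi}{m+1}) = \id$ forces chip $1$ into the first position, which forces $\pi^{-1}(1)$ to lie in the stated range.

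The statement about chip $n+1$ at the last position follows from the mirror argument tracking the \emph{larger} chip pushed rightward: with $t_0 = \max(r, \pi^{\ast}_{\mathrm{mid}})$ and $t_k = \max(t_{k-1}, \pi^{\ast}_{\mathrm{mid}+k})$, the chip at the last position of $\mathcal{T}(\conf{\pi}{m+1})$ equals $\max\{r, \pi^{\ast}_{\mathrm{mid}}, \dots, \pi^{\ast}_n\}$, which equals the top label $n+1$ iff $\pi^{-1}(n) \geq m+1$ in the odd case or $\geq m$ in the even case; in the odd case this is also immediate from \cref{prop:sym} applied to $\hat{\pi}$. The step I expect to require the most care is justifying the recursion for $s_k$: one must verify the precise left-to-right topple order in pass~1 so that at the moment $-k$ topples the two chips there are exactly the original $\pi^{\ast}_{\mathrm{mid}-k}$ and $s_{k-1}$, with nothing else having intervened. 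A small additional subtlety arises in the even case, where the left wave terminates one step short of the boundary; this is what produces the asymmetric index ranges ($j \leq m$ versus $j \geq m$) in the even part of the lemma.
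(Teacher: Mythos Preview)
Your argument is correct and follows essentially the same route as the paper: both reduce the lemma to the first pass, using that the chips landing at the two boundary sites of $L_n$ after pass~1 are permanently frozen. The paper phrases this by tracking chip~$1$ directly---if $1$ starts right of the origin it moves exactly one step left during pass~1 and so cannot reach the extreme left, while if it starts at or left of the origin it is swept all the way left---whereas you compute the boundary chip explicitly as the running minimum $s_k$ (resp.\ maximum $t_k$); these are two formulations of the same first-pass sweep.
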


\begin{proof}
By \cref{thm:certi}, it suffices to consider $\conf{\pi}{\floor{n/2}+1}$.
Suppose $1$ is 
to the right of the origin in $\conf{\pi}{\floor{n/2}+1}$. Then, in the first pass, $1$ will move exactly one position to the left (since it is smallest) at the end of the first pass. Therefore, $1$ is not frozen in its correct position, which is the extreme left. So $\pi$ cannot be toppleable.

Conversely, suppose $1$ is in a position on or to the left of the origin in $\conf{\pi}{\floor{n/2}+1}$. Then it gets a partner at some point during the first pass. After that time, it keeps moving left for all future times until the first pass ends and gets placed at the extreme left, its correct position. A similar argument works for $n$, completing the proof.
\end{proof}

We are now in a position to characterize toppleable permutations.
This characterization involves bounds on the difference between values and positions and is in the spirit of so-called {\em Vesztergombi permutations} \cite{vesztergombi-1974,lovasz-vesztergombi-1978}, where these differences have global bounds.

\begin{thm}
\label{thm:topp-struct}
A permutation $\pi \in S_{2m+1}$ is $(m+1)$-toppleable if and only if 
\begin{align*}
 \pi_i \leq  m+i,& \quad 1 \leq i \leq m, \\
 \pi_i \geq  i-m,& \quad m+1 \leq i \leq 2m+1. 
\end{align*}
Equivalently, 
\begin{align*}
\pi^{-1}_i \in \{1,\dots,m+i\}, & \quad 1 \leq i \leq m+1,\\
\pi^{-1}_i \in \{i-m, \dots, 2m+1\},&  \quad m+2 \leq i \leq 2m+1.
\end{align*}

A permutation $\pi \in S_{2m}$ is 
$(m+1)$-toppleable if and only if 
\begin{align*}
 \pi_i \leq  m+i,& \quad 1 \leq i \leq m, \\
 \pi_i \geq  i-m+1,& \quad m+1 \leq i \leq 2m. 
\end{align*}
Equivalently, 
\begin{align*}
\pi^{-1}_i \in \{1,\dots,m+i-1\}, & \quad 1 \leq i \leq m,\\
\pi^{-1}_i \in \{i-m, \dots, 2m\},&  \quad m+1 \leq i \leq 2m.
\end{align*}
\end{thm}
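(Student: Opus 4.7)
The plan is to prove the odd case $n = 2m+1$ by induction on $m$, with the even case following by an analogous argument. The base case $m = 0$ is trivial since $S_1 = \{\id\}$ and the conditions are vacuous.

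For the inductive step, I will exploit the pass structure introduced just before the theorem. If the first pass of $\conf{\pi}{m+1}$ proceeds ``correctly'', then after it chip $1$ is fixed at position $-(m+1)$, chip $2m+2$ is fixed at position $m+1$, positions $\pm m$ are empty, and the remaining $2m$ chips occupy $L_{2m-1}$ with a double pile at the origin. Relabeling chips $2, \ldots, 2m+1$ as $1, \ldots, 2m$, this residual can be interpreted as $\conf{\pi'}{r'}$ for some $\pi' \in S_{2m-1}$ and $r' \in [2m]$, where $r'$ is one of the two chips at the origin.

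For the forward direction, assume $\pi$ is $(m+1)$-toppleable. \Cref{lem:pos1} immediately yields $\pi^{-1}(1) \le m+1$ and $\pi^{-1}(2m+1) \ge m+1$, which are the $i=1$ and $i=2m+1$ cases of the inverse conditions. The subsequent dynamics sort the residual correctly, so $\pi'$ is $r'$-toppleable for the specific $r'$ appearing; after translating via \cref{thm:certi} to the central-$r$ characterization for $S_{2m-1}$, the inductive hypothesis delivers the $(m-1)$-level inequalities on $\pi'$. Pulling these back through the explicit description of $\pi'$ in terms of $\pi$ gives the remaining inequalities of the statement. For the backward direction, assume $\pi$ satisfies the inequalities. \Cref{lem:pos1} ensures that chips $1$ and $2m+2$ are fixed correctly during pass $1$. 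One then shows that the inequalities on $\pi$ imply the analogous $(m-1)$-level inequalities on $\pi'$; by the inductive hypothesis $\pi'$ is toppleable, and composing pass $1$ with the toppleability of the residual gives that $\pi$ is $(m+1)$-toppleable.

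The main obstacle is giving a clean explicit description of $\pi'$ in terms of $\pi$, and verifying that the inequalities translate as claimed. A careful trace of the first pass shows that it consists of two cascades from the origin: a left cascade propagating the running minimum of $\{r, c_0, c_{-1}, \ldots\}$ leftward while depositing the running maximum at each position it vacates, and a mirror-image right cascade. The chip ending at position $-j$ in the residual is then $\max(c_{-j-1}, \min(c_{-j}, \ldots, c_0, r))$, and symmetrically for the right side. One must then argue that the conditions $\pi_i \le m+i$ for $i \le m$ and $\pi_i \ge i-m$ for $i \ge m+1$ on $\pi$ are equivalent to the analogous conditions on $\pi'$ together with the boundary cases $\pi^{-1}(1) \le m+1$ and $\pi^{-1}(2m+1) \ge m+1$. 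A further delicate point is that the residual's $r'$ need not equal the central value $m$, so the application of the inductive hypothesis must either always produce $r' \in \{m, m+1\}$ (using \cref{thm:certi}(3)) or be carried out for a strengthened inductive statement covering non-central $r'$. Handling this case analysis -- based on whether each chip initially lies in the left half, the right half, or at the origin, and how it participates in the cascade -- is where the technical work of the proof concentrates.
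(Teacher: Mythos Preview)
Your inductive-on-$m$ approach differs from the paper's argument. The paper does \emph{not} reduce to a smaller instance after one pass; instead it works directly with the full sequence of passes and proves, by a double induction on the pass index $j$ and the chip label $i$, the invariant that after pass $j$ chips $1,\ldots,j$ are fixed correctly and each remaining small chip $i$ satisfies $p_j(i) < i - j$. The forward direction in the paper is even simpler: if chip $i$ violates the bound, it starts to the right of the origin, and since a chip can move at most one step left per pass, after $i-1$ passes it is still right of the origin and hence cannot be frozen at its correct (leftmost unfrozen) position in pass $i$.

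Your plan has a genuine gap in the forward direction. You want to infer the $(m-1)$-level inequalities on $\pi'$ from the fact that $\pi'$ is $r'$-toppleable, by ``translating via \cref{thm:certi}'' to central-$r$-toppleability and then invoking the inductive hypothesis. But \cref{thm:certi} goes the other way: it says central-toppleable implies $r'$-toppleable, not the converse. Knowing only that $\pi'$ is $r'$-toppleable for some possibly non-central $r'$ does \emph{not} let you conclude $\pi'$ is $m$-toppleable, so the inductive hypothesis does not apply. Your escape hatch (a) --- that $r'$ always lands in $\{m,m+1\}$ --- is false: a short computation shows the two chips at the origin after pass~1 are $\max(c_{-1},\min(m+1,c_0))$ and $\min(c_1,\max(m+1,c_0))$, and neither need be $m$ or $m+1$. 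Your escape hatch (b) --- strengthening the induction to characterise $r$-toppleability for all $r$ --- is a different (and harder) theorem, not the one stated.

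The backward direction of your plan is more viable, since there \cref{thm:certi} is used in the correct direction (central-toppleable $\Rightarrow$ $r'$-toppleable). But even there, the translation of inequalities from $\pi$ to $\pi'$ requires exactly the position-tracking analysis that the paper carries out directly; once you do that work you have essentially reproduced the paper's double induction, and the outer induction on $m$ becomes superfluous.
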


\begin{proof}
We will prove the forward implication first. 
Suppose for some $1 \leq i \leq m$, $i$ is in a position greater than $i+m$ in $\pi$. Therefore, it is to the right of the origin in $\conf{\pi}{m+1}$. 
If $i=1$, we are done by \cref{lem:pos1}. If not, consider the situation after $i-1$ passes. After each pass, $i$ either moves left by exactly one position or moves to the right by an arbitrary number of positions. Therefore, at the end of the $(i-1)$'th pass, $i$ is strictly to the right of the origin. Therefore, arguing exactly as in \cref{lem:pos1}, $i$ cannot be in its correct position at the end of the $i$'th pass. So, $\pi$ cannot be $(m+1)$-toppleable.
The case when $i \geq m+2$ is in a position to the left of $i-m$ in $\pi$ is done by symmetry.

For the converse, suppose $i \in \{\pi_1,\dots,\pi_{m+i}\}$ for $1 \leq i \leq m+1$. Then in $\conf{\pi}{m+1}$, $i$ lies to the left of position $i$ in $L_n$. Denote the position of $i$ in $L_n$ after the $j$'th pass by $p_j(i)$. We will show for every $j$: after the $j$'th pass, we have $i = 1, \ldots, j$ fixed in their correct positions, and for each $i = j + 1, \ldots, m + 1$, $p_j(i) < i - j$ (Notice this means that every $i \leq m + 1$ is fixed correctly after the $(m + 1)$'th pass).

We will prove this by double induction, first on $j$: the base case is simply the $0$'th pass, which is satisfied by our assumptions. Then supposing our statement holds for pass $j - 1$, consider pass $j$. Notice first that for $i = j$, we have $p_{j-1}(j) \leq 0$ by the induction hypothesis. Since $j$ is the smallest non-fixed chip, then by the same reasoning as Lemma \ref{lem:pos1}, in the $j$'th pass, $j$ topples left until it is fixed into its correct position (for $j=1$, this is exactly Lemma \ref{lem:pos1}). Thus, after pass $j$, chips $1, \ldots, j$ are correctly fixed.

Now, for chips $i= j + 1,\ldots, m + 1$, we will use induction on $i$ to argue that during the $j$'th pass, $i$ never moves to the right of position $i - j$, and must end strictly left of position $i - j$ (i.e. $p_j(i) < i-j$ as desired). The base case is $i = j$, and simply follows from our previous argument about $i = j$. Now, supposing the statement holds for $j, \ldots, i - 1$, recall from our (outer) induction hypothesis that $p_{j-1}(i) \leq i - j$. We consider two cases:
\begin{enumerate}
    \item If $p_{j-1}(i) < 0$, then in pass $j$ it can move at most one space to the right, so it will fall in a desired position (left of $i - j$).
    
    \item If $p_{j-1}(i) \geq 0$, then it may move to the right multiple times. However, it either never reaches position $i-j$, in which case it is in a desired position, or it reaches (or starts at) position $i-j$. But upon reaching this position, it must topple left, as the (inner) induction hypothesis gives that no smaller chip can reach that position. Thus, in either case, it ends at a position left of $i-j$, so $p_j(i) < i=j$ as desired.
\end{enumerate}
When $\pi^{-1}_i \in \{i - m, \ldots, 2m+1\}$ for $m + 2 \leq i \leq 2m +1$, an identical argument shows that each of these chips lies in its correct position after $m + 1$ passes. The argument for even $n$ is mostly identical and omitted. 
\end{proof}

We now prove a bijective correspondence relating $(m+1)$-toppleable permutations to permutations with the correct excedance set,

\begin{lem}
\label{lem:bij}
Permutations $\pi \in S_{2m+1}$ such that $\pi_i \leq m+i$ for $1 \leq i \leq m$ and $\pi_i \geq i-m$ for $m+1 \leq i \leq 2m+1$
are in bijection with permutations in $S_{2m+1}$ whose excedance set is $\{1,\dots,m\}$.

Permutations $\pi \in S_{2m}$ such that $\pi_i \leq m + i$ for $1 \leq i \leq m$ and $\pi_i \geq i-m + 1$ for $n + 1 \leq i \leq 2m$ are in bijection with permutations in $S_{2m}$ whose excedance set is $\{1,\dots,m-1\}$.
\end{lem}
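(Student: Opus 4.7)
The plan is to interpret each side of the asserted bijection as the set of permutations $\pi\in S_n$ for which every value $\pi_i$ is confined to a prescribed allowed interval. The toppleable characterization of \cref{thm:topp-struct} gives intervals $A_i$ and the excedance-set condition gives intervals $B_i$; I will produce a simple transformation on permutations that carries the $A$-constraints onto the $B$-constraints.

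\textbf{Odd case $n=2m+1$.} Here $A_i=\{1,\dots,m+i\}$ for $i\le m$ and $A_i=\{i-m,\dots,2m+1\}$ for $i\ge m+1$, while $B_i=\{i+1,\dots,2m+1\}$ for $i\le m$ and $B_i=\{1,\dots,i\}$ for $i\ge m+1$.  A short case analysis, splitting $j$ into the ranges $1\le j\le m+1$ and $m+2\le j\le 2m+1$ (in the second range, $(j+m)\bmod(2m+1)=j-m-1\in\{1,\dots,m\}$), yields the pointwise identity
\[
A_j \;=\; B_{(j+m)\bmod(2m+1)} \qquad (\text{residue in } \{1,\dots,2m+1\}).
\]
Consequently the cyclic shift $\sigma_i:=\pi_{(i-m)\bmod(2m+1)}$ is the desired bijection.

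\textbf{Even case $n=2m$.} Now $A_i=\{1,\dots,m+i\}$ for $i\le m$ and $A_i=\{i-m+1,\dots,2m\}$ for $i\ge m+1$, while $B_i=\{i+1,\dots,2m\}$ for $i\le m-1$, $B_m=\{1,\dots,m\}$, and $B_i=\{1,\dots,i\}$ for $i\ge m+1$.  A plain cyclic shift no longer works: for example, $A_{2m}=\{m+1,\dots,2m\}$, whereas the only $B_i$ of matching size $m$ is $B_m=\{1,\dots,m\}$, its complement.  To remedy this I compose the shift with a global value-complement.  Let $\rho(j):=(m-j)\bmod(2m)$ (representative in $\{1,\dots,2m\}$) and set $\sigma_{\rho(j)}:=2m+1-\pi_j$.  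A case analysis on $j\le m-1$, $j=m$, $m+1\le j\le 2m-1$, and $j=2m$ verifies
\[
\{2m+1-k : k\in A_j\} \;=\; B_{\rho(j)},
\]
showing that this map is the desired bijection.

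The more delicate of the two cases is the even one, where the parity of $n$ forces the toppling to leave its final vacancy at position $1$ rather than at the origin; this shifts the toppleable intervals by one on the right half and breaks the clean cyclic symmetry of the odd case. Identifying the combination of cyclic shift and value-complementation that restores the row-wise matching of intervals is the only subtle step; once done, everything reduces to elementary arithmetic with indices modulo $n$.
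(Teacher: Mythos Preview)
Your argument is correct. Both cases reduce the lemma to matching the constraint intervals $A_j$ (from the toppleable characterization) with the constraint intervals $B_i$ (from the excedance condition) via a position bijection, and your identities $A_j=B_{(j+m)\bmod(2m+1)}$ in the odd case and $\{2m+1-k:k\in A_j\}=B_{(m-j)\bmod 2m}$ in the even case check out by the case splits you indicate.

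Compared with the paper, your odd-case bijection is genuinely different and cleaner: the paper uses the value-complement together with block-reversal of positions, $\sigma_i=2m+2-\pi_{m+1-i}$ for $i\le m$ and $\sigma_i=2m+2-\pi_{3m+2-i}$ for $i\ge m+1$, whereas you observe that a pure cyclic shift of positions already aligns the intervals, with no value transformation needed. (On the example $\pi=31524$ the two maps give different images, $53241$ versus $24315$, both with excedance set $\{1,2\}$.) Your even-case map, on the other hand, coincides with the paper's: writing out $\rho(j)=(m-j)\bmod 2m$ gives $\rho(j)=m-j$ for $j\le m-1$ and $\rho(j)=3m-j$ for $j\ge m$, which is exactly the paper's piecewise position map, so here the only difference is that you package the two branches into a single modular formula. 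The trade-off is that the paper's approach is uniform across parities (always complement-plus-block-reverse), while yours exploits the extra cyclic symmetry available when $n$ is odd to give a simpler map there.
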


\begin{proof}
We consider the odd case first. Suppose $\pi \in S_{2m+1}$ satisfies the above conditions. 
Then we define $\sigma \in S_{2m+1}$ as 
\[
\sigma_i = \begin{cases}
2m+2 - \pi_{m+1-i} & 1\le i \le m, \\
2m+2 - \pi_{3m+2-i} & m+1\le i \le 2m+1\,.
\end{cases}
\]
Then we claim that $\sigma$ has excedance set $\{1,\dots,m\}$.
First, suppose $\sigma$ has the desired excedance set. By definition,
$\sigma_i > i$ for $1 \leq i \leq m$. Thus, $\pi_{m+1-i} < 2m+2-i$, or equivalently, $\pi_i < m+i+1$.
Also, $\sigma_i \leq i$ for $m+1 \leq i \leq 2m+1$. Therefore, $\pi_{3m+2-i} \geq 2m+2-i$ for this range of $i$.
Equivalently, $\pi_i \geq i-m$. Thus, $\pi$ satisfies the above conditions.

Conversely, suppose $\pi$ satisfies the above conditions. There are three cases to consider. First, 
consider the entries $i$ such that $1 \leq i \leq m+1$. Then $i \in \{\pi_1,\dots,\pi_{m+i}\}$.
Thus, $2m+2-i \in \{ \sigma_1, \dots, \sigma_m \} \cup \{ \sigma_{2m+2-i}, \dots, \sigma_{2m+1} \}$, where 
$m+1 \leq 2m+2-i $. Now, if $i$ belonged to the set $\{\pi_1,\dots,\pi_{m}\}$, then
$2m+2-i \in \{\sigma_1,\dots,\sigma_m\}$ and we get an excedance in $\{1,\dots,m\}$.
If not, $2m+2-i \in \{\sigma_{2m+2-i}, \dots, \sigma_{2m+1}\}$, and we do not get an excedance in 
$\{2m+2-i, \dots, 2m+1\}$.

Second, consider the entries $i$ such that $m+2 \leq i \leq 2m$. Then $i \in \{\pi_{i-m}, \dots, \allowbreak \pi_{2m+1}\}$.
Therefore, $2m+2-i \in \{ \sigma_1, \dots, \sigma_{2m+1-i}\} \cup \{ \sigma_{m+1}, \dots, \sigma_{2m+1} \}$,
where $2 \leq 2m+2-i \leq m$. Now, if $i$ belonged to the set $\{\pi_{i-m},\dots,\pi_{m}\}$, 
then $2m+2-i \in \{\sigma_1,\dots,\sigma_{2m+1-i}\}$ and we get an excedance in $\{1,\dots,2m+1-i\}
\subset \{1, \dots, m\}$. If not, $2m+2-i \in \{\sigma_{m+1}, \dots, \sigma_{2m+1}\}$, and we do not get an excedance in 
$\{m+1, \dots, 2m+1\}$.

Lastly, the entry $2m+1 \in \{\pi_{m+1},\dots, \pi_{2m+1} \}$. Thus, $1 \in \{\sigma_{m+1}, \dots, \allowbreak \sigma_{2m+1} \}$
and the value $1$ can never contribute an excedance. Thus, we do not get an excedance in 
$\{m+1, \dots, 2m+1\}$. Therefore, every chip which lands in positions $\{1,\dots,m\}$ of $\sigma$
gives an excedance and which lands outside it does not, proving that $\sigma$ has excedance set exactly $\{1,\dots,m\}$.

For the even case, suppose $\pi \in S_{2m}$ satisfies the conditions above. Then we define $\sigma \in S_{2m}$ as 
\[
\sigma_i = \begin{cases}
2m+1 - \pi_{m-i} & 1 \leq i \leq m-1, \\
2m+1 - \pi_{3m-i} & n \leq i \leq 2m.
\end{cases}
\]
The overall strategy of proof is similar to the odd case.
\end{proof}

\begin{eg}
As an illustration of \cref{lem:bij} in the odd case, let $\pi = 31524 \in S_5$ which satisfies $\pi_i \leq 2+i$ for $1 \leq i \leq 2$ and $\pi_i \geq i-2$ for $3 \leq i \leq 5$. Then $\sigma = 53241$, which has excedance set $\{1,2\}$.

For the even case, let $\pi = 216435 \in S_6$ which satisfies $\pi_i \leq 3+i$ for $1 \leq i \leq 3$ and $\pi_i \geq i-2$ for $4 \leq i \leq 6$. Then $\sigma = 652431$, which has excedance set $\{1,2\}$.
\end{eg}

We are now in a position to prove our main result.

\begin{proof}[Proof of \cref{thm:topp=exc}]
By \cref{thm:certi}, we see that $\pi \in S_n$ is toppleable if it is $(\floor{n/2}+1)$-toppleable.
According to \cref{thm:topp-struct}, $\pi_i \leq \floor{n/2} +i$ for $1 \leq i \leq \floor{n/2}$ and $\pi_i \geq i- \ceil{n/2}+1$ for $\floor{n/2}+1 \leq i \leq n$.
Now, \cref{lem:bij} proves that the number of such permutations is $a_{n,\floor{(n-1)/2}}$ bijectively, completing the proof.
\end{proof}

\section{Collapsed permutations}
\label{sec:collap}

As explained before, Hopkins, McConville and Propp considered toppling on $\mathbb{Z}$ starting with $n$ chips at the origin. One of their main results~\cite[Theorem 13]{hopkins2016sorting} is that when $n$ is even, the final configuration is always $\id$, the identity permutation. Along the way, they prove bounds on the possible positions of chip $k$ at every step of the toppling process. Specifically, they show~\cite[Lemma 12]{hopkins2016sorting} that the position of chip $k$ lies between $- \floor{(n+1-k)/2}$ and $\floor{k/2}$.
When $n$ is odd (resp. even), $n = 2m+1$ (resp. $n = 2m$), the final configuration will contain single chips in all positions (resp. all positions but the origin)  $-m$ through $m$. 
We now apply this condition to count possible permutations arising from this condition switching notation to considering permutations as bijections from the set $[n]$ to itself.

\begin{defn}
\label{def:Gn}
We say that a permutation $\pi \in S_n$ is collapsed if
\[
\pi^{-1}_k \geq  
\begin{cases}
\ceil{k/2} & n \text{ odd}, \\
1 + \floor{k/2} & n \text{ even}
\end{cases} 
\quad \text{and} \quad
\pi^{-1}_k \leq \ceil{n/2} + \floor{k/2}.
\]
Let $G_n$ be the subset of collapsed permutations in $S_n$.
\end{defn}

For example, $G_3 = \{123, 132, 213\}$ and $G_4 = \{1234, 1324\}$.
Since, for $n$ even, the only permutation that appears as a result of toppling is $\id$, counting the cardinality of $G_n$ in that case is not directly relevant to the toppling problem.

To state our results, we recall a well-known combinatorial triangle.
The \emph{Seidel triangle} is the triangular sequence $S_{n,k}$ for $n \geq 1$ given by
\begin{align*}
S_{1,1} =& 1, \\
S_{n,k} =& 0, \quad k < 2 \text{ or }  (n+3)/2 < k, \\
S_{2n,k} =& \ds \sum_{i \geq k} S_{2n-1,i}, \\
S_{2n+1,k} =& \ds \sum_{i \leq k} S_{2n,i}.
\end{align*}
The first few rows of the Seidel triangle are given in \cref{tab:seidel}.

\begin{center}
\begin{table}[htbp!]
\begin{tabular}{c|ccccc}
$n \backslash k$ & 2 & 3 & 4 & 5 & 6 \\
\hline
1 & 1\\
2 & 1\\
3 & 1 & 1 \\
4 & 2 & 1 \\
5 & 2 & 3 & 3\\
6 & 8 & 6 & 3\\
7 & 8 & 14 & 17 & 17 \\
8 & 56 & 48 & 34 & 17 \\
9 & 56 & 104 & 138 & 155 & 155\\
10 & 608 & 552 & 448 & 310 & 155
\end{tabular}
\caption{Rows $1$ through $10$ of the Seidel triangle.}
\label{tab:seidel}
\end{table}
\end{center}

The (unsigned) {\em Genocchi numbers of the first kind} $g_{2n}$, $n \geq 1$ are the numbers on the rightmost diagonal of the Seidel triangle and they count the number of permutations in $S_{2n-3}$ whose excedance set is $\{1,3,\dots,2n-5\}$. The exponential generating function of $g_{2n}$ is given by
\[
\sum_{n \geq 0} g_{2n} \frac{x^{2n}}{(2n)!} = x \tan \left( \frac{x}{2} \right).
\]
The first few numbers of this sequence are~\cite[A110501]{OEIS}
\[
\{g_{2n}\}_{n=1}^8 = \{1, 1, 3, 17, 155, 2073, 38227, 929569\}.
\]

\begin{thm}
\label{thm:genocchi1}
The cardinality of $G_{2n+1}$ is $g_{2n+4}$.
\end{thm}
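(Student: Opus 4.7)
My plan is to construct an explicit bijection between $G_{2n+1}$ and the set of permutations in $S_{2n+1}$ whose excedance set equals $\{1, 3, 5, \ldots, 2n-1\}$, mirroring the strategy used in \cref{lem:bij}. Since the (already-cited) excedance characterization of the Genocchi numbers says that $g_{2m}$ counts permutations in $S_{2m-3}$ with excedance set $\{1, 3, \ldots, 2m-5\}$, this latter set has cardinality $g_{2n+4}$. The base case $|G_3| = 3 = g_6$, with $G_3 = \{123, 132, 213\}$, is a direct sanity check.

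First I would unpack the collapsed condition into a position-interval picture. For $\pi \in G_{2n+1}$, the odd value $2j-1$ lies in a position in $[j,\, n+j]$ (an interval of length $n+1$), while the even value $2j$ lies in a position in $[j,\, n+j+1]$ (length $n+2$). This staircase pattern, with its characteristic asymmetry between odd and even values, is exactly the Dumont--Foata-type Genocchi staircase; the asymmetry is what should produce a Genocchi number rather than a single binomial coefficient.

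Next I would exhibit an explicit bijection $\Phi : G_{2n+1} \to \{\sigma \in S_{2n+1} : \operatorname{Exc}(\sigma) = \{1, 3, \ldots, 2n-1\}\}$, modelled on the complement-and-reverse construction of \cref{lem:bij} but adapted to the alternating excedance pattern. A natural starting point is to separate $\pi$ according to the parity of its values: the placement of the $n+1$ odd-valued chips should determine the odd-indexed entries of $\sigma$ (where $\sigma_i > i$ is required), and the placement of the $n$ even-valued chips should determine the even-indexed entries (where $\sigma_i \leq i$ is required). Verification would proceed by case analysis on the range in which each value of $\pi$ falls, directly parallel to the three-case argument of \cref{lem:bij}, showing that the collapsed bounds $\ceil{k/2} \leq \pi^{-1}_k \leq n+1+\floor{k/2}$ translate exactly into excedances at odd positions and non-excedances at even positions, with $\sigma_{2n+1} \leq 2n+1$ automatic.

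The main obstacle is isolating the precise form of $\Phi$: the widths $n+1$ versus $n+2$ of the odd- and even-value intervals force the reverse-and-complement maps applied to the two blocks to use slightly different constants, and one must check that the two blocks mesh into a genuine permutation of $[2n+1]$ with the prescribed excedance set and an evident inverse. If this direct construction proves awkward, a fallback is to verify inductively that $|G_{2n+1}|$ obeys the same Seidel-triangle recursion as $g_{2n+4}$: decompose $G_{2n+1}$ by the position of the value $2n+1$ (confined to $[n+1, 2n+1]$) together with a parallel decomposition by the position of $1$ (confined to $[1, n+1]$), and match the resulting alternating double sums with the Seidel-triangle recursion displayed before \cref{tab:seidel}.
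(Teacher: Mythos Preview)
Your target set is exactly right, and the high-level plan---build an explicit bijection from $G_{2n+1}$ to $\{\sigma\in S_{2n+1}:\operatorname{Exc}(\sigma)=\{1,3,\dots,2n-1\}\}$---matches the paper. But you have missed the decisive simplification, and as a result your proposed construction is both more complicated than needed and not yet well-defined.

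The paper does \emph{not} separate $\pi$ by the parity of its values, nor does it use any complement-or-reverse trick in the style of \cref{lem:bij}. Instead it rewrites the collapsed condition by \emph{position}: for $\pi\in G_{2n+1}$ one has $\pi_i\le 2i$ for $1\le i\le n$ and $\pi_{n+1+i}\ge 2i$ for $1\le i\le n$ (position $n+1$ is unconstrained). With this reformulation the bijection is a bare interleaving of positions,
\[
\sigma_{2i}=\pi_i,\qquad \sigma_{2i-1}=\pi_{n+1+i}\quad(1\le i\le n),\qquad \sigma_{2n+1}=\pi_{n+1},
\]
and the excedance set $\{1,3,\dots,2n-1\}$ drops out in one line: $\sigma_{2i}=\pi_i\le 2i$ gives non-excedance at even positions, $\sigma_{2i-1}=\pi_{n+1+i}\ge 2i>2i-1$ gives excedance at odd positions, and $\sigma_{2n+1}\le 2n+1$ is automatic. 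No case analysis, no complement, no meshing of blocks.

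Your value-parity scheme, by contrast, has an immediate cardinality mismatch that you do not address: there are $n+1$ odd values in $[2n+1]$ but only $n$ odd positions $1,3,\dots,2n-1$ at which an excedance is required, so ``odd values determine odd-indexed entries'' cannot be a straight assignment. Some odd value would have to be routed to a non-excedance position, and you give no rule for which one or how. This is the concrete gap. The Seidel-recursion fallback is plausible but unnecessary once you switch to the position-based reformulation above.
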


\begin{proof}
Suppose $\pi \in G_{2n+1}$. Then one can check that the definition of $G_n$ in \cref{def:Gn} is equivalent to saying $\pi_i \leq 2i$ and $\pi_{n+1+i} \geq 2i$ for $1 \leq i \leq n$.
Define a map $f: G_{2n+1} \to S_{2n+1}$ which send $\pi \mapsto \sigma = (\sigma_1,\dots,\sigma_{2n+1})$ such that $\sigma_{2i} = \pi_i$
and $\sigma_{2i-1} = \pi_{n+1+i}$ for $1 \leq i \leq n$, and $\sigma_{2n+1} = \pi_{n+1}$. By the previous sentence, it is immediate that the excedance set of $\sigma$ is $\{1,3,\dots,2n-3,2n-1\}$.
Since $f$ is injective, one can check that the inverse map is well-defined and that $f^{-1}(\sigma) \in G_{2n+1}$ for $\sigma \in S_{2n+1}$ with excedance set $\{1,3,\dots,2n-3,2n-1\}$ in a similar way.
\end{proof}

As an example, the bijection for $n=1$ is illustrated below:
\[
\begin{array}{|c|c|}
\hline
G_3 & \pi \in S_3 \text{ with excedance set } \{1\} \\
\hline
132 & 213 \\
123 & 312 \\
213 & 321 \\
\hline
\end{array}
\]

The {\em median Genocchi numbers} or the {\em Genocchi numbers of the second kind}, denoted $H_{2n+1}$ are given by the leftmost diagonal of the Seidel triangle (see \cref{tab:seidel}). They count among other things, ordered pairs $((a_1,\dots,a_{n-1}), \allowbreak (b_1,\dots,b_{n-1})) \in \mathbb{Z}^{n-1} \times \mathbb{Z}^{n-1}$ such that $0 \leq a_k \leq k$ and $1 \leq b_k \leq k$ for all $k$ and $\{a_1,\dots,a_{n-1}, \allowbreak b_1,\dots,b_{n-1}\} = [n-1]$~\cite{hetyei-2019}.
The first few numbers of this sequence are~\cite[A005439]{OEIS}
\[
\{H_{2n-1}\}_{n=1}^8 = \{1, 2, 8, 56, 608, 9440, 198272, 5410688\}.
\]
No particularly simple formula or generating function seems to be known for $H_{2n+1}$. In terms of the Genocchi numbers of the first kind, we have~\cite{han-zeng-1999}
\[
H_{2n+1} = \sum_{i=0}^n g_{2n-2i} \binom{n}{2i+1}.
\]
Although it is not clear either from the above definition or the formula, it is true that $H_{2n-1}$ is divisible by $2^{n-1}$. 
The numbers $h_n = H_{2n+1}/2^{n}$ are called the {\em normalized median Genocchi numbers}.
The first few numbers of this sequence are~\cite[A000366]{OEIS}
\[
\{h_{n}\}_{n=0}^7 = \{1, 1, 2, 7, 38, 295, 3098, 42271\}.
\]
A classical combinatorial interpretation for these are the Dellac configurations, which we now define.

A {\em Dellac configuration} of order $n$ is a $2n \times n$ array containing $2n$ points, such that every row has a point, every column has two points, and the points in column $j$ lie between rows $j$ and $n+j$, both inclusive, $1 \leq j \leq n$.
For example, the two Dellac configurations of order $2$ are
\[
\begin{ytableau}
\bullet & \\
\bullet & \\
& \bullet \\
& \bullet \\
\end{ytableau}
\quad \raisebox{-0.6cm}{\text{and}} \quad
\begin{ytableau}
\bullet & \\
& \bullet \\
\bullet & \\
& \bullet \\
\end{ytableau}\raisebox{-0.6cm}{.}
\]
Dellac's result~\cite{dellac-1900} is that the number of Dellac configurations of order $n$ is $h_n$.

\begin{thm}
\label{thm:genocchi2}
The cardinality of $G_{2n}$ is given by $H_{2n-1}$.
\end{thm}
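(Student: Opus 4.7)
The plan is to show that specifying $\pi \in G_{2n}$ amounts to choosing a Dellac configuration of order $n-1$ together with a binary vector of length $n-1$, which yields $|G_{2n}| = 2^{n-1} h_{n-1} = H_{2n-1}$ by Dellac's result and the identity $h_{n-1} = H_{2n-1}/2^{n-1}$.

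First I would rewrite \cref{def:Gn} in the even case: $\pi \in S_{2n}$ lies in $G_{2n}$ if and only if $\pi_1 = 1$, $\pi_{2n} = 2n$, and for each $i \in \{1, \dots, n-1\}$ both $\pi^{-1}_{2i}$ and $\pi^{-1}_{2i+1}$ lie in the common range $\{i+1, \dots, n+i\}$. This is immediate from the observation $\floor{(2i)/2} = \floor{(2i+1)/2} = i$, and the endpoints $k = 1$ and $k = 2n$ are forced into positions $1$ and $2n$ respectively because no other value is allowed there (every $k \ge 2$ satisfies $\pi^{-1}_k \ge 2$, and every $k \le 2n-1$ satisfies $\pi^{-1}_k \le 2n-1$).

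Next, given $\pi \in G_{2n}$, I would form the $(2n-2) \times (n-1)$ array whose column $i$ ($1 \le i \le n-1$) carries dots at rows $\pi^{-1}_{2i} - 1$ and $\pi^{-1}_{2i+1} - 1$, with positions $\{2, \dots, 2n-1\}$ identified with rows $\{1, \dots, 2n-2\}$. By the reformulation above, these two rows lie in $\{i, \dots, (n-1)+i\}$, matching the column constraint of a Dellac configuration of order $n-1$; moreover, each row contains exactly one dot, since each of the values $\{2, \dots, 2n-1\}$ belongs to a unique pair $(2i, 2i+1)$ and sits at a unique position in $\{2, \dots, 2n-1\}$. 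Together with a binary vector recording, for each column $i$, which of the two marked rows is labeled $2i$ (rather than $2i+1$), this array reconstructs $\pi$ uniquely.

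Conversely, every Dellac configuration of order $n-1$ paired with any binary vector of length $n-1$ yields a valid $\pi \in G_{2n}$: within a column, either assignment of labels $2i, 2i+1$ to the two marked rows is admissible since both values share the position range $\{i+1,\dots,n+i\}$, and the choices in different columns are independent of one another. Therefore $|G_{2n}| = 2^{n-1} \cdot h_{n-1} = H_{2n-1}$. The only delicate step is the reformulation in the first paragraph; once it is in place, the counting reduces to transparent combinatorial bookkeeping together with Dellac's theorem.
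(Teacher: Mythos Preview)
Your proof is correct and follows essentially the same route as the paper: both arguments reformulate the membership condition for $G_{2n}$ to see that $\pi_1=1$, $\pi_{2n}=2n$, and that the pair $\{2i,2i+1\}$ occupies two positions in $\{i+1,\dots,n+i\}$, then identify the resulting data with a Dellac configuration of order $n-1$ together with a free binary choice in each column. The paper phrases the $2^{n-1}$ factor as a divisibility observation and then bijects the canonical representatives (those with $\pi^{-1}_{2i}<\pi^{-1}_{2i+1}$) with Dellac configurations, whereas you package the same content as a direct bijection with (Dellac configuration)~$\times$~(binary vector); the two presentations are equivalent.
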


\begin{proof}
Suppose $\pi \in G_{2n}$. Directly from the definition of $G_n$ in \cref{def:Gn}, we see that both $2i$ and $2i+1$ have to lie in positions between $i+1$ and $i+n$, both inclusive, for $1 \leq i \leq n-1$. Therefore, interchanging $2i$ and $2i+1$ will also give a permutation in $G_{2n}$, and this can be done independently for each $i$. Thus, $\# G_{2n}$ is divisible by $2^{n-1}$.

Now, we focus on $\pi \in G_{2n}$ such that 
$\pi^{-1}_{2i} < \pi^{-1}_{2i+1}$ for $1 \leq i \leq n-1$, i.e. those for which $2i$ precedes $2i+1$ in one-line notation. We will now construct a bijection between such permutations and Dellac configurations of order $n-1$.
Again, from the definition of $G_n$ in \cref{def:Gn}, we have that 
$\pi_1 = 1$,
$2 \leq \pi_i \leq 2i-1$ for $1 \leq i \leq n$,
$2(i-n) \leq \pi_{i} \leq 2n-1$ for $n+1 \leq i \leq 2n-1$,
and $\pi_{2n} = 2n$.
Since the first and last entries are forced, we focus on $(\pi_2,\dots,\pi_{2n-1})$. 
Construct a configuration $C$ of points on an $(2n-2) \times (n-1)$ array as follows: for $2 \leq i \leq 2n-1$, place a point in position $(i-1, \floor{\pi_i/2})$. We claim that $C$ is a Dellac configuration. First of all, it is obvious that there is one point in every row and two points in every column. Now, if $\floor{\pi_i/2} = j$, $\pi_i$ is either $2j$ and $2j+1$ and in both cases, $j \leq i-1 \leq j+n-1$ (from the first paragraph of the proof). This is precisely the condition for $C$ to be a Dellac configuration of order $n-1$.
For example, the permutation $(1, 2, 4, 3, 6, 5, 7, 8)$ is in bijection with the Dellac configuration
\[
\begin{ytableau}
\bullet & & \\
 & \bullet & \\
\bullet  & & \\
 & & \bullet \\
 & \bullet & \\
 & & \bullet \\
\end{ytableau}
\]

Conversely, given a Dellac configuration $C$ of order $n-1$, label the points in column $j$ as $2j$ and $2j+1$ from top to bottom for all $1 \leq j \leq n-1$. Now, read the labels in each row from top to bottom to form the tuple $\pi = (\pi_2,\dots,\pi_{2n-1})$. By construction, this is a permutation of the set $\{2,\dots,2n-1\}$. Because of the Dellac constraint, $2j$ and $2j+1$ lie between positions $j$ and $n-1+j$ in $\pi$ for each $j$. Prepending $1$ and appending $2n$ to $\pi$ therefore gives a permutation in $G_{2n}$, completing the proof.
\end{proof}

\begin{rem}
Even though several combinatorial interpretations are known for the median Genocchi numbers $H_{2n-1}$, it seems like $G_{2n}$ is one of the few for which the divisibility by $2^{n-1}$ is manifest.
\end{rem}

\section{AUSOs of complete bipartite graphs and excedances}
\label{sec:complete-bipartite}

Recall the definition of excedance, acyclic orientation (AO) and acyclic unique sink orientation (AUSO) from \cref{sec:topp}.
We denote the set of AUSOs of a graph $G$ for which the unique sink is always the fixed vertex $s$ as $\mathcal{A}(G, s)$. 
Recall also from \cref{sec:topp} that the number of AUSOs with fixed sink is independent of the choice of sink. We therefore pay no mind to the actual choice of vertex $s$ in the following sections.
In this section, we will prove \cref{thm:topp=ursell} bijectively.
The discussion will be made easier by considering \textit{topological sorts} of the vertices of an acyclic orientation.

\begin{defn}
Given a directed acyclic graph (e.g., an AO of any graph), a \emph{topological sort} of the vertices is a total ordering $\tau$ on the vertices such that for every directed edge $e = (u, v)$, then $\tau(u) < \tau(v)$. We will often say that an \textit{undirected} graph $G$ has topological sort $\tau$ if there is an AO on $G$ with topological sort $\tau$. 
\end{defn}

\begin{eg}
Notice that a given directed acyclic graph can have multiple topological sorts. For example, 
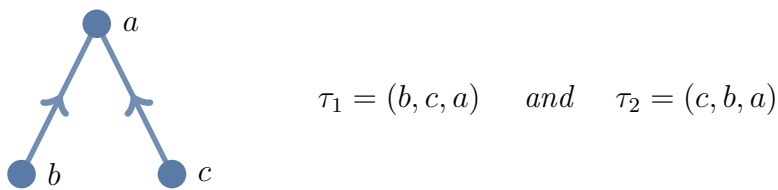
\begin{figure}[h!]
    \centering
    \begin{tikzpicture}[scale=2]
    \begin{scope}[thick] 
        \draw[ro] (0, 0)--(-0.5, -1);
        \draw[ro] (0, 0)--(0.5, -1);
    
        \node[main, label=right:$a$] at (0, 0) {};
        \node[main, label=right:$b$] at (-0.5, -1) {};
        \node[main, label=right:$c$] at (0.5, -1) {};

        \node[] at (3, -0.5) {$\tau_1 = (b,c,a) \quad \text{ and } \quad \tau_2 = (c,b,a)$};
    \end{scope}
    \end{tikzpicture}
    \caption{Directed acyclic graph with two different topological sorts $\tau_1$ and $\tau_2$.}
\end{figure}

\end{eg}

In a directed acyclic graph, we call two vertices \textit{incomparable} if there is no directed path from one vertex to the other. Likewise, in a topological sort of an undirected graph $G$, two vertices are incomparable if they are incomparable in the AO induced by the sort. 
A topological sort of $G$ in which every pair of vertices that are incomparable appear in increasing numerical order is called a \emph{canonical topological sort} of $G$.

\begin{lem}
\label{lem:canonical-sorts}
If $K$ is a complete multipartite graph with vertices labeled $1, 2, \ldots, n$, then there is a bijection between AOs and canonical topological sorts of $K$. 
\end{lem}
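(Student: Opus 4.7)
The plan is to construct mutually inverse maps between AOs of $K$ and canonical topological sorts. One direction is immediate: given a canonical sort $\tau$, orient each edge $uv$ from the earlier to the later vertex in $\tau$, yielding an AO since $\tau$ is a total order.

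For the other direction, given an AO $A$ with associated partial order $\leq_A$, I would define a total order $\tau_A$ by declaring $u$ to precede $v$ whenever either (a) $u \leq_A v$ with $u \neq v$, or (b) $u$ and $v$ are incomparable in $A$ and $u < v$ numerically. The main obstacle is showing this relation is transitive; antisymmetry and totality follow quickly from the definitions. The essential input is a structural lemma specific to complete multipartite graphs: two vertices $u, v$ lying in the same part are incomparable in $A$ if and only if for every $w$ in a different part, the edges $uw$ and $vw$ are oriented in the same direction relative to $w$. The forward implication is immediate (opposite orientations would produce a length-two path joining $u$ and $v$); the reverse follows because any putative directed path from $u$ to $v$, combined with the same-direction property applied to its first edge, would force a directed cycle in $A$.

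With this lemma in hand, I would verify transitivity of $\tau_A$ by case analysis on the two types of relations chained together. Case (a,a) is trivial from transitivity of $\leq_A$. In the mixed cases (a,b) and (b,a), applying the same-direction property at the vertex adjacent to the path forces the comparable relation to extend all the way to the far endpoint. In case (b,b), the same-direction property transfers through the middle vertex: if the pairs $(u,v)$ and $(v,w)$ each satisfy same-direction relative to every vertex in another part, so does $(u,w)$, hence $u, w$ are also same-part incomparable, and the numerical inequality $u < v < w$ ensures they appear in the canonical order.

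Finally, it remains to verify that the maps are inverses. Starting from $A$ and reorienting by $\tau_A$, each edge $uv$ of $K$ joins two comparable vertices, so clause (a) dictates their order in $\tau_A$ and the reorientation reproduces $A$. Starting from a canonical sort $\tau$, constructing $A_\tau$ and then computing its canonical sort recovers $\tau$: comparable pairs in $A_\tau$ inherit their order from directed paths, which respect $\tau$ by construction, and incomparable pairs in $A_\tau$ appear in numerical order in $\tau$ precisely because $\tau$ was assumed canonical. The heart of the argument is the structural lemma together with the mixed-type transitivity cases; once these are handled, the rest of the proof is essentially bookkeeping.
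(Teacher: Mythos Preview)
Your proposal is correct and rests on the same structural fact the paper uses: in a complete multipartite AO, two vertices are incomparable precisely when they lie in the same part and have identical in/out-neighbourhoods (your ``same-direction'' property). The paper, however, packages this observation more efficiently: rather than defining $\tau_A$ directly and verifying transitivity through four cases, it notes that this property makes incomparability an \emph{equivalence relation} on the vertices. Any topological sort of the AO must then order the equivalence classes identically, and the freedom is exactly the ordering within each class; choosing increasing numerical order within classes yields the canonical sort, and uniqueness is immediate. Your case analysis (especially the mixed (a,b) and (b,b) cases) is really a by-hand proof that incomparability is transitive and that comparable elements dominate whole incomparability classes---which is what the equivalence-relation viewpoint gives for free. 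Both arguments are valid; the paper's is shorter because it identifies the right abstraction up front.
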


\begin{proof}
It is immediate that any canonical topological sort of the vertices of 
any graph $G$ uniquely induces an AO on $G$.

Conversely, given an AO $\mathcal{O}$ of $K$, it has a canonical preimage. Two vertices are incomparable in $\mathcal{O}$ if and only if they are in the same vertex set of $K$, and have identical in/out-neighborhoods (otherwise there would be a directed path from one to the other). Thus, as $K$ is complete multipartite, incomparibility is an equivalence relation, and any topological sort of $\mathcal{O}$ must order the equivalence classes with respect to one another. Any ordering of the vertices within each class is still a sort of $\mathcal{O}$, and so simply ordering each class in increasing order gives a canonical topological sort preimage of $\mathcal{O}$.
\end{proof}

For $0 \leq m \leq n-1$, denote by $\mathcal{E}(n, m)$ the set of permutations in $S_n$ with excedance set exactly $\{1,\ldots,m\}$. That is, denoting the excedance set by $\ex(\sigma)$, 
\[
    \mathcal{E}(n, m) = \bigl\{\sigma \in S_{n} \bigm| \ex(\sigma) = \{1, \ldots, m\} \bigr\}.
\]

It will be convenient to consider the (disjoint) cycle decomposition of a permutation $\sigma = \sigma_1 \cdots \sigma_{k}$. It is not hard to see that $\sigma \in \mathcal{E}(m + n, m)$ if and only if each individual cycle $\sigma_i = (a_1 ,\ldots, a_j)$ satisfies that for each $k$,
\begin{itemize}
    \item $a_k < a_{k + 1}$ if and only if $a_k \in \{1,\ldots, m\}$ (since $\sigma(a_k) = a_{k + 1} > a_k$).
    \item $a_k \geq a_{k + 1}$ if and only if $a_k \in \{m + 1, \ldots, m + n\}$.
\end{itemize} 
(where we let $a_{j + 1} = a$). For example, if $m = 4$, $n = 4$, 
then we may consider the permutation $\sigma = (27)(138645)$. Both cycles follow the the above property: $(27)$ increases only after 2, and $(138645)$ increases after 1, 3, and 4. And indeed, $\ex(\sigma) = \{1, 2, 3, 4\}$. 

While discussing the complete bipartite graph $K_{m, n}$, we refer to the `Left' vertex set containing $m$ vertices as $L = \{1, \ldots, m\}$, and the `Right' containing $n$ vertices as ${R = \{m + 1, \ldots, m + n\}}$. 

Denote by $\mathcal{R}(m, n)$ the set of AOs of $K_{m, n}$ in which there are no sinks in $L$ (sinks only allowed in $R$). By simply adding a vertex to $L$ to get $K_{m + 1, n}$ and demanding this vertex be a sink, we maintain the acyclic property, and the added vertex becomes the unique sink. Removing the vertex gives the inverse, and the resulting bijection shows 
\[
    \abs{\mathcal{R}(m, n)} = \abs{\mathcal{A}(K_{m + 1, n}, s)}.
\]

We will give a bijection between AUSOs $\mathcal{A}(K_{m + 1, n}, s)$ and permutations with a fixed excedance set $\mathcal{E}(m + n, m)$. We use the discussion above to write the bijection in terms of the cycle decomposition of permutations in $\mathcal{E}(m + n, m)$ and orientations in $\mathcal{R}(m, n)$. 

Let $f\colon \mathcal{E}(m + n, m) \to \mathcal{R}(m, n)$ be defined as follows. For $\sigma \in \mathcal{E}(m + n, m)$, construct $f(\sigma)$ by:
\begin{enumerate}
    \item Write each cycle in the cycle decomposition of $\sigma$ such that its least element appears first.
    
    \item Order the cycles relative to each other from right to left by least element (disjoint cycles commute). 
    
    \item Remove the parentheses around the cycle decomposition as written in step (2), and consider the resulting sequence of vertices as a topological sort inducing an orientation on $K_{m, n}$.
\end{enumerate}

\begin{eg}
Suppose $m = 4$, $n = 5$, and we are given the permutation 
\[
    \sigma = \begin{pmatrix} 1 & 2 & 3 & 4 & 5 & 6 & 7 & 8 & 9\\3 & 9 & 6 & 7 & 5 & 2 & 4 & 8 & 1 \end{pmatrix} = (13629)(47)(5)(8).
\]
It is easy to verify that $\sigma \in \mathcal{E}(9, 4)$.
To construct $f(\sigma)$ notice that each cycle is already written according to step 1. Step 2 yields the decomposition $(\underline{8})(\underline{5})(\underline{4}7)(\underline{1}3629)$. Finally, step 3 gives the topological sort $8,5,4,7,1,3,6,2,9$ of $K_{4,5}$ with the resulting orientation being acyclic and containing no sink in $L = \{1, 2, 3, 4\}$. We can check this by seeing that the last vertex in the topological sort is in $R$, so no vertex in $L$ can be a sink.
\end{eg}

\begin{lem}
The function $f$ is well-defined.
\end{lem}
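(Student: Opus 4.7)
The plan is to verify two things about $f(\sigma)$ for an arbitrary $\sigma \in \mathcal{E}(m+n, m)$: first, that steps (1)--(3) produce a total ordering of $V(K_{m,n}) = \{1,\ldots,m+n\}$, which then uniquely determines an AO of $K_{m,n}$ by orienting each edge from its earlier to its later endpoint; and second, that the resulting AO has no sink in $L$, so that it actually lies in $\mathcal{R}(m,n)$.

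The first point is essentially bookkeeping. The disjoint cycles of $\sigma$ partition $\{1,\ldots,m+n\}$, so steps (1) and (2) merely prescribe a canonical representative for each cycle and a canonical order among the cycles; erasing parentheses in step (3) then produces a sequence in which every vertex of $K_{m,n}$ appears exactly once. Since $K_{m,n}$ is bipartite with all edges crossing between $L$ and $R$, any such total ordering orients each edge without creating a directed cycle, producing a well-defined AO (corresponding via \cref{lem:canonical-sorts} to its canonical topological sort).

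The substantive step is showing that $f(\sigma)$ has no sink in $L$, and my approach is to invoke the structural description of cycles of permutations in $\mathcal{E}(m+n, m)$ recorded just before the definition of $f$. Fix a cycle $(a_1, \ldots, a_j)$ of $\sigma$ written with $a_1$ its smallest entry, and read indices cyclically so that $a_{j+1} = a_1$; the description says $a_k < a_{k+1}$ if and only if $a_k \in L$. Taking $k = j$ gives $a_j \geq a_1$, forcing $a_j \in R$. Also, any fixed point $(v)$ satisfies $\sigma(v) = v$, hence $v$ is not an excedance and so $v \notin L$. Combining these, every $v \in L$ sits in a cycle of length at least $2$ and is strictly before the final entry $a_j$ in the within-cycle order prescribed by step (1). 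Consequently, in the sequence produced by step (3), the vertex $a_j \in R$ appears strictly after $v$, so the edge between $v$ and $a_j$ is oriented out of $v$ and $v$ is not a sink.

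The only place requiring real care is the last bit of bookkeeping: making sure that the within-cycle order from step (1) places some element of $R$ after $v$ regardless of how step (2) interleaves this cycle with the others. Once the cycle-structure characterization of $\mathcal{E}(m+n,m)$ is invoked, this is immediate, and together with the bipartite acyclicity observation it yields $f(\sigma) \in \mathcal{R}(m,n)$, completing the proof that $f$ is well-defined.
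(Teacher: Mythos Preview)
Your proof is correct and follows essentially the same approach as the paper: both use the cycle characterization of $\mathcal{E}(m+n,m)$ to locate an $R$-vertex appearing after every $L$-vertex in the topological sort, thereby ruling out sinks in $L$. The only minor difference is that the paper argues globally (the single last vertex of the entire sort is shown to lie in $R$, hence is adjacent to all of $L$), whereas you argue locally (the last vertex of each cycle lies in $R$, giving each $v\in L$ an out-neighbor within its own cycle); both variants work equally well.
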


\begin{proof}
Consider any permutation $\sigma$. Since $f(\sigma)$ is an orientation determined by a topological sort, it must be acyclic. 
To check that $f$ indeed gives an AO with no sink in $L$, write $\sigma$ according to step 2 of the description of $f$ as $\sigma=\sigma_1 \cdots  \sigma_k$. The rightmost vertex $v$ in the topological sort $f(\sigma)$ is the final term in the cycle $\sigma_k = (\underline{1},\ldots, v)$. In particular, $\sigma(v) = 1$ and since 1 is the least element, $\sigma(v) \leq v$. Therefore $v \not \in \ex(\sigma)$, so
\[
    v \in (L \cup R) \setminus \ex(\sigma) = \{m + 1, \ldots, m + n\} = R\,.
\]
Since $v$ is the rightmost term of the topological sort, it is a sink. Since it is in $R$, it has all vertices in $L$ as neighbors, and so no vertex in $L$ can be a sink.
\end{proof}

\begin{proof}[Proof of \cref{thm:topp=ursell}]
We now prove that $f$ is a bijection by constructing its inverse. 
Given an orientation $\mathcal{O} \in \mathcal{R}(m, n)$, construct $f^{-1}(\mathcal{O})$ as follows:
 
Since $\mathcal{O}$ is acyclic, it has a topological sort. But it may not be unique, as there could be vertices $u, v$ either both in $L$ or both in $R$ whose positions could be swapped (they are incomparable). We designate a unique topological sort by specifying that whenever $u < v$, with $u$ and $v$ incomparable.
\begin{itemize}
    \item if $u, v \in L$, then $u$ appears before $v$ in the topological sort. 
    \item if $u, v \in R$, then $u$ appears after $v$ in the topological sort.
\end{itemize}

This defines a unique topological sort $v_1, \ldots, v_{m + n}$, and there is a unique way to insert parentheses into the sequence to make it into a cycle decomposition written as specified in step 2 of the description of $f$. In particular, the rightmost cycle contains 1 and all elements to its right, the next cycle contains the next smallest unused element and all unused elements to its right, etc. 

To see that the resulting permutation $\sigma = f^{-1}(\mathcal{O})$ has excedance set $L$, notice that our tie-breaking strategy and the fact that all elements of $L$ are less than those of $R$ ensures the following conditions hold: 
\begin{enumerate}
    \item In the topological sort, $v_i < v_{i + 1}$ if and only if $v_i \in L$;
    \item Because we inserted parentheses by choosing the next smallest unused element, every cycle $(v_i\ldots v_{i + k})$ must have either $k > 0$, $v_i \in L$ and $v_{i + k} > v_i$, or $v_i \in R$ and $v_{i + k} = v_i$ (that is, $k = 0$).
\end{enumerate} 
These two conditions ensure that in every cycle $(v_i, \ldots, v_{i + k})$, we have $v_j < v_{j + 1}$ if and only if $v_j \in L$. Thus, noting that $\sigma(v_j) = v_{j + 1}$, the excedance set of the permutation is $L$, as desired. 
\end{proof}

\begin{eg}
As an example of $f^{-1}$, we reverse the example we did while defining $f$ (recall $m = 4$, $n = 5$). We start with an orientation which has topological sort $5,8,4,7,3,1,6,2,9$ (notice that there is no sink in $L$) as in \cref{fig:AO-eg}.

Notice that in this orientation, vertices $5, 8 \in R$ are incomparable, as well as vertices $1, 3 \in L$. So, using the tie-breaking strategy, we get the unique topological sort: $8,5,4,7,1,3,6,2,9$.
Now, we insert parentheses first left of 1, then left of 4, then 5, then 8 to obtain $f^{-1}(\mathcal{O}) = (8)(5)(47)(13629)$.
Notice that for this permutation, $f^{-1}(f(\sigma)) = \sigma$, as we expected. 
\end{eg}

\begin{figure}[h!]
\centering
\begin{tikzpicture}[scale=2]
\begin{scope}[thick] 
    \draw[ro=0.78] (0, 0)--(-0.5, -1.5); 
    \draw[ro=0.78] (1, 0)--(-0.5, -1.5); 
    \draw[ro=0.78] (2, 0)--(-0.5, -1.5); 
    \draw[ro=0.8] (3, 0)--(-0.5, -1.5); 

    \draw[ro=0.8] (0, 0)--(2.5, -1.5); 
    \draw[ro=0.8] (1, 0)--(2.5, -1.5); 
    \draw[ro=0.8] (2, 0)--(2.5, -1.5); 
    \draw[ro=0.82] (3, 0)--(2.5, -1.5); 

    \draw[fo=0.24] (3, 0)--(0.5, -1.5);
    \draw[fo=0.275] (3, 0)--(1.5, -1.5);
    \draw[fo=0.28] (3, 0)--(3.5, -1.5);

    \draw[ro=0.82] (0, 0)--(1.5, -1.5);
    \draw[ro=0.8] (1, 0)--(1.5, -1.5);
    \draw[ro=0.8] (2, 0)--(1.5, -1.5);

    \draw[fo=0.3] (0, 0)--(0.5, -1.5);
    \draw[fo=0.19] (0, 0)--(3.5, -1.5);

    \draw[fo=0.24] (2, 0)--(0.5, -1.5);
    \draw[fo=0.19] (2, 0)--(3.5, -1.5);

    \draw[ro=0.8] (1, 0)--(0.5, -1.5);
    
    \draw[fo=0.19] (1, 0)--(3.5, -1.5); 

    \node[main, label=right:$1$] at (0, 0) {};
    \node[main, label=right:$2$] at (1, 0) {};
    \node[main, label=right:$3$] at (2, 0) {};
    \node[main, label=right:$4$] at (3, 0) {};
    \node[main, label=right:$5$] at (-0.5, -1.5) {};
    \node[main, label=right:$6$] at (0.5, -1.5) {};
    \node[main, label=right:$7$] at (1.5, -1.5) {};
    \node[main, label=right:$8$] at (2.5, -1.5) {};
    \node[main, label=right:$9$] at (3.5, -1.5) {};
\end{scope}
\end{tikzpicture}
\caption{The AO $\mathcal{O}$ on $K_{4, 5}$. The topological sort specified in the description of $f^{-1}$ is $(8,5,4,7,1,3,6,2,9)$, and this produces the permutation $(8)(5)(47)(13629)$.}
\label{fig:AO-eg}
\end{figure}
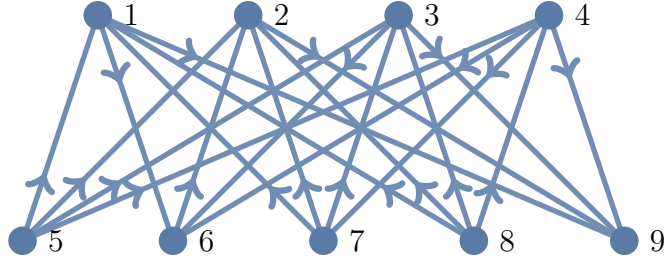

\section{Enumeration of AOs and AUSOs of complete multipartite graphs}
\label{sec:ao-enum}

In this section, we give formulas for the number of acyclic orientations and acyclic orientations with unique sink of complete multipartite graphs. 
We first begin with the easier case of complete bipartite graphs.

\subsection{Complete Bipartite Graphs}

We know from~\cite{ehrenborg-steingrimsson-2000}, that 
\begin{align*}
    \abs{\mathcal{E}(m + n, m)} &= \sum_{i = 1}^{m + 1} (-1)^{m + 1 - i} \cdot i!\cdot i^{n-1}\cdot  \stirling{m + 1}{i}\\
    &= \sum_{i = 1}^{n} (-1)^{n - i} \cdot i!\cdot i^{m}\cdot  \stirling{n}{i} \\
    &= n!n^m - \sum_{j = 1}^{n-1} (-1)^{j-1} \cdot (n-j)!\cdot (n-j)^{m}\cdot  \stirling{n}{n-j},
\end{align*}
where $\stirling{n}{k}$ is the Stirling number of the second kind, 
the second equality follows from the symmetry $\mathcal{E}(m + n, m) = \mathcal{E}(m + n, n-1)$~\cite{ehrenborg-steingrimsson-2000} (which itself isn't immediately obvious), and the third follows by pulling out the largest term of the sum, then reversing the order of summation. To prove this formula directly for $\abs{\mathcal{R}(m, n)}$, we will need a lemma:

\begin{lem}
\label{lem:surj}
Fix $j < n$. The number of ways to partition $[n]$ into $n-j$ non-empty consecutive parts, then label all of the elements so that the labels within each part are in reverse order is $(n-j)!\stirling{n}{n-j}$. In other words, it is equal to the number of surjections from $[n]$ to $[n-j]$. 
\end{lem}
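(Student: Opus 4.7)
The plan is to construct an explicit bijection between the objects counted in the lemma and ordered set partitions of $[n]$ into $n-j$ non-empty blocks. Since ordered set partitions of $[n]$ into $n-j$ blocks are in standard bijection with surjections $[n] \to [n-j]$, of which there are $(n-j)!\stirling{n}{n-j}$, this will complete the proof.

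First I would set up notation. A partition of $[n]$ into $n-j$ non-empty consecutive parts is precisely the data of a composition $(a_1, \ldots, a_{n-j})$ of $n$ with each $a_i \geq 1$; the parts themselves are the consecutive intervals
\[
I_i = \{a_1 + \cdots + a_{i-1} + 1,\, \ldots,\, a_1 + \cdots + a_i\}.
\]
A valid labeling is then a permutation $\pi \in S_n$ whose restriction to each $I_i$ is strictly decreasing with respect to the natural left-to-right order of positions inside $I_i$.

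The key observation is that once the unordered subset $S_i = \pi(I_i) \subseteq [n]$ is specified for each $i$, the permutation $\pi$ is completely determined: on $I_i$ we must list the elements of $S_i$ in decreasing order. Moreover, the composition is recovered from the sequence of sizes $|S_i| = a_i$. Hence the map
\[
\bigl((a_1, \ldots, a_{n-j}),\, \pi\bigr) \;\longmapsto\; (S_1, \ldots, S_{n-j})
\]
is a bijection from the set of decreasing-labeled compositions onto the set of ordered set partitions of $[n]$ into $n-j$ non-empty blocks. Its inverse sends $(S_1, \ldots, S_{n-j})$ to the composition $(|S_1|, \ldots, |S_{n-j}|)$ together with the labeling that places the elements of $S_i$ into $I_i$ in decreasing order. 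Identifying an ordered set partition $(S_1, \ldots, S_{n-j})$ with the surjection $f : [n] \to [n-j]$ defined by $f^{-1}(i) = S_i$ yields the standard count $(n-j)!\stirling{n}{n-j}$, and the lemma follows.

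I do not expect a serious obstacle here; the only thing to verify carefully is that no data is lost or double-counted, and this is immediate because forcing the labels to be decreasing on each part collapses the permutation data onto the set-partition data, while the block sizes simultaneously recover the composition.
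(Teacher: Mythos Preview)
Your proof is correct and is essentially the same bijection as the paper's: your ordered set partition $(S_1,\ldots,S_{n-j})$ with $S_i=\pi(I_i)$ is exactly the paper's surjection $g$ via $g^{-1}(i)=S_i$, and both recover the composition from the block sizes and the labeling by writing each block in decreasing order. The only difference is presentational---you pass through ordered set partitions while the paper works directly with surjections.
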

By consecutive, we mean the part contains all elements between its minimal and maximal part. e.g.
\[
    [123][45][6][78] \text{ is valid, but} \qquad [124][3] \text{ is not.}
\]

\begin{proof}
Consider a surjection $g\colon [n] \to [n-j]$. The size of the preimage of each $i \in [n-j]$ gives the size of the $i$'th part. e.g. if $\abs{g^{-1}(1)} = 4$, then $[1234]$ is a part. Surjectivity ensures each part is nonempty.

Then, simply label the elements from that part with the elements from the preimage in reverse order. e.g. if $g^{-1}(1) = \{4, 5, 7, 9\}$, then there would be a part $[1234]$ with labels $9754$ (in that order). 

Going from the partition/labeling to a surjection $g$ is done simply by letting the labeling of the first part be the preimage of 1, that of the second part be the preimage of $2, \ldots$. Nonempty parts ensures a surjective function. 
\end{proof}

Recall from \cref{sec:complete-bipartite} that if we let $\mathcal{R}(m, n)$ be the set of AOs of $K_{m, n}$ in which there are no sinks in $L$ (the left vertex set), then there is a simple bijection showing $\abs{\mathcal{R}(m, n)} = \abs{\mathcal{A}(K_{m + 1, n}, s)}$. 

\begin{thm}
\label{thm:bipartite-enum}
\[
    \abs{\mathcal{R}(m, n)} = n!n^m - \sum_{j = 1}^{n-1} (-1)^{j-1} \cdot (n-j)!\cdot (n-j)^{m}\cdot  \stirling{n}{n-j}.
\]
\end{thm}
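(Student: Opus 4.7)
The plan is to give a direct inclusion–exclusion proof using \cref{lem:surj} together with a classical Stirling identity. Call an \emph{enriched orientation of level $i$} a pair consisting of an ordered partition $(D_1, \ldots, D_i)$ of the right vertex set $R$ into $i$ nonempty blocks, together with a function $g\colon L \to [i]$. To each such datum, associate the orientation of $K_{m,n}$ in which $u \in L$ points to $v \in D_l$ precisely when $l \geq g(u)$. A directed cycle would force a strictly increasing chain of block indices, so this orientation is acyclic; and $u$ always points to the nonempty block $D_{g(u)}$, so no vertex of $L$ is a sink. Hence every enriched orientation lies in $\mathcal{R}(m,n)$, and by \cref{lem:surj} the number with level $i$ is $i!\,\stirling{n}{i}\cdot i^m$.

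The heart of the proof is to fiber this count over the induced AO. Given $\mathcal{O} \in \mathcal{R}(m,n)$, let $(C_1, \ldots, C_{i^*})$ be its (unique, coarsest) incomparability partition of $R$ with sizes $n_1, \ldots, n_{i^*}$, and let $g^*\colon L \to [i^*]$ be the map sending $u$ to the index of the smallest class in its out-neighborhood. The key observation is that in the induced orientation of an enriched datum $(D, g)$, consecutive sub-blocks $D_l, D_{l+1}$ merge into the same true incomparability class if and only if no $u$ has $g(u) = l + 1$. Consequently, the enriched orientations inducing $\mathcal{O}$ are in bijection with \emph{refinements}: for each $k$ one chooses an ordered partition of $C_k$ into $a_k \geq 1$ nonempty sub-blocks (in $a_k!\,\stirling{n_k}{a_k}$ ways), and then $g$ is forced by the requirement that the out-neighborhood of each $u$ with $g^*(u) = k$ equal $C_k \cup \cdots \cup C_{i^*}$, namely $g(u) = a_1 + \cdots + a_{k-1} + 1$.

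Combining the two counts with sign $(-1)^{n-i}$ yields
\[
\sum_{i=1}^{n} (-1)^{n-i}\, i!\, \stirling{n}{i}\, i^m
\;=\; \sum_{\mathcal{O} \in \mathcal{R}(m,n)} \prod_{k=1}^{i^*(\mathcal{O})} \Biggl( \sum_{a=1}^{n_k} (-1)^{n_k - a}\, a!\, \stirling{n_k}{a} \Biggr).
\]
Each inner sum equals $1$: extracting the coefficient of $x^{n_k}/n_k!$ from the identity $e^{-x} = \sum_{a \geq 0} (-1)^a (e^x - 1)^a$ gives $\sum_{a} (-1)^a\, a!\, \stirling{n_k}{a} = (-1)^{n_k}$, which rearranges to the claim. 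Hence the right-hand side collapses to $\abs{\mathcal{R}(m,n)}$, and reindexing the left-hand side via $j = n - i$ and isolating the $j = 0$ term $n!\,n^m$ yields the formula stated in the theorem.

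The step I expect to require the most care is the refinement correspondence in the second paragraph: rigorously verifying that $g$ is forced by $g^*$ together with the refinement, and conversely that every refinement really does produce an enriched datum whose induced AO equals $\mathcal{O}$ itself rather than some coarser AO obtained by accidental further merging. Once these bookkeeping details are in place, the inclusion–exclusion identity collapses cleanly.
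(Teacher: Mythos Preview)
Your proof is correct and takes a genuinely different route from the paper's. The paper works with explicit topological sorts: it counts all $n!\,n^m$ pairs consisting of a canonical sort of $K'_{m,n}$ (right side unlabeled, no sink in $L$) together with a labeling of $R$, then uses inclusion--exclusion over the ``bad'' events $A_i = \{r_i \sim r_{i+1},\ \ell(r_i) > \ell(r_{i+1})\}$ to strip out the non-canonical pairs, invoking \cref{lem:surj} to evaluate the intersections and produce the $j$-th term of the sum directly. Your argument instead introduces enriched orientations of variable level, fibers the signed sum $\sum_i (-1)^{n-i} i!\,\stirling{n}{i}\,i^m$ over the actual AO produced, and collapses each fiber to $1$ via the identity $\sum_a (-1)^{N-a} a!\,\stirling{N}{a} = 1$. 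This is a M\"obius-style collapse over the refinement poset of ordered partitions of $R$, and the product factorization $\prod_k(\cdots)$ over the incomparability classes makes transparent why the contribution of each $\mathcal{O}$ is independent of the shapes $n_1,\ldots,n_{i^*}$. The paper's version is more concrete and is reused verbatim as the template for the multipartite case in the following subsection; yours is somewhat more structural and would generalize along the same lines.

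Regarding the step you flag as needing care: it dissolves once you observe that in $K_{m,n}$ the induced AO of an enriched datum $(D,g)$ is determined entirely by the out-neighborhoods of the $L$-vertices. With the forced choice $g(u) = a_1 + \cdots + a_{g^*(u)-1} + 1$, each $u$ has out-neighborhood exactly $C_{g^*(u)} \cup \cdots \cup C_{i^*}$, identical to its out-neighborhood in $\mathcal{O}$; hence the induced orientation \emph{is} $\mathcal{O}$ on the nose, and no accidental merging can occur. (Equivalently, for each $k<i^*$ the classes $C_k,C_{k+1}$ are distinguished in $\mathcal{O}$ by some $u$ with $g^*(u)=k+1$, and this $u$ then has $g(u)=s_k+1$, separating the corresponding sub-blocks.)
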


\begin{proof}
Consider the graph $K'_{m, n}$, the complete bipartite graph with $m$ labeled vertices $L = \{1, \ldots, m\}$, and $n$ unlabeled vertices $R = \{r, \ldots, r\}$. 

By \cref{lem:canonical-sorts}, Counting the number of AOs on $K'_{m, n}$ such that there are no sinks in $L$ can be done by counting canonical topological sorts of the vertices such that the final vertex in the sort is from $R$. To do this, first write out all $n$ vertices from $R$. Then, for each $v \in L$, $v$ can be placed in any of the $n$ spaces between these vertices (not counting the final space, since we don't allow $v$ to be a sink). 

Once such a space is chosen, $v$'s ordering with respect to the other vertices of $L$ in the same space is uniquely determined, since we are counting canonical sorts. Thus, we have $n^m$ such AOs of $K'_{m, n}$.
\[
    \_r\_r\_r\_r \quad \longrightarrow \quad r13r2r4r
\]

Now, we want to relabel the vertices of $R$, and count the resulting number of canonical topological sorts for $K_{m, n}$. For a given canonical topological sort of $K'_{m, n}$, there are $n!$ ways to relabel $R$, but some may result in a non-canonical sort. 

Given any sort $\mathcal{O}$ for $K'_{m, n}$, we may refer to the $i$'th occurrence of $r$ as $r_i$, and for a particular relabeling $\ell$, the corresponding label is denoted $\ell(r_i)$. Also, let $r_i \sim r_{i + 1}$ denote that they are adjacent in the sort (i.e. they are incomparable. No vertex from $L$ was placed in the space between them). Then, we may count the non-canonical sort/relabeling pairs as the union of sets:
\[
    A_i = \{(\mathcal{O}, \ell) \mid r_i \sim r_{i + 1} \text{ and } \ell(r_i) > \ell(r_{i + 1})\}
\]
for each $i = 1, \ldots, n - 1$ (it is clear that every non-canonical sort/ relabeling pair must be in at least one such set). Then, inclusion-exclusion gives a way to count all non-canonical sort/relabeling pairs:
\[
    \abs{\bigcup_{i = 1}^{n-1} A_i} = \sum_{\varnothing \neq J \subseteq [n-1]} (-1)^{\abs{J} - 1} \abs{\bigcap_{i \in J} A_i} = \sum_{j = 1}^{n-1} (-1)^{j - 1} \sum_{\substack{J \subseteq [n-1] \\ \abs{J} = j}} \abs{\bigcap_{i \in J} A_i} \,.
\]
For fixed $J$, $\abs{\bigcap_{i \in J} A_i}$ is the number of sort/label pairs such that for each $i \in J$, $r_i \sim r_{i + 1}$, and within a group of incomparable vertices from $R$, the relabeling is in reverse order (by definition of $A_i$). In particular, for a fixed $j$, we would like to count all of the ways to partition $r_1, \ldots, r_n$ into $n-j$ consecutive parts (this is the choice of $J$), then label them so that within each part the labelling is in reverse order. Notice, this is exactly what \cref{lem:surj} counts. Then, after choosing a $J$ of size $j$ and the labels for the $r_i$, we must complete the orientation by choosing the placement of vertices from $L$. Since we need $r_i \sim r_{i + 1}$ for each $i \in J$, there are $j$ spaces disallowed for placement of vertices from $L$, so there are $(n-j)^m$ choices of for each $J$. In total, we get that 
\[
    \sum_{\substack{J \subseteq [n-1] \\ \abs{J} = j}} \abs{\bigcap_{i \in J} A_i} = (n-j)^m(n-j)!\stirling{n}{n-j}
\]
and therefore
\[
    \abs{\bigcup_{i = 1}^{n-1} A_i} = \sum_{j = 1}^{n-1} (-1)^{j - 1} (n-j)^m(n-j)!\stirling{n}{n-j}.
\]
Finally, as these are the non-canonical sort/labelling pairs, we subtract from the total number to get the canonical ones
\begin{align*}
\abs{\mathcal{R}(m, n)} &= n!n^m - \abs{\bigcup_{i = 1}^{n-1} A_i} \\
&=  n!n^m - \sum_{j = 1}^{n-1} (-1)^{j-1} \cdot (n-j)!\cdot (n-j)^{m}\cdot  \stirling{n}{n-j},
\end{align*}
completing the proof.
\end{proof}

\subsection{Complete Multipartite Graphs}

Here, we consider AOs of complete multipartite graphs $K_{n_1, \ldots, n_N}$. We refer to the vertex sets in the separate parts as $V_1,\dots,V_N$.
To simplify notation, we set $|k| = \sum_{i = 2}^N k_i$ for any
$k = (k_2,\dots,k_N) \in \mathbb{N}^{N-1}$.

\begin{lem}
\label{unlabeled}
Consider the complete $N$-partite graph $K'_{n_1, \ldots, n_N}$ with the vertices in each $n_2, \ldots, n_N$-set unlabeled within their vertex set. Then 
\[
    \abs{\mathcal{A}(K'_{n_1, \ldots, n_N})} = \left(1 + |n|\right)^{n_1} \cdot \binom{|n|}{n_2, \ldots, n_N}\,.
\]
\end{lem}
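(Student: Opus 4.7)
The plan is to count AOs of $K'_{n_1,\ldots,n_N}$ by directly building their canonical topological sorts, following the template of the bipartite case in the proof of \cref{thm:bipartite-enum}. The two factors in the target formula suggest a two-step construction: first arrange the unlabeled vertices of $V_2,\dots,V_N$ in a row, and then insert each labeled vertex of $V_1$ into one of the resulting gaps. \cref{lem:canonical-sorts}, suitably adapted to this unlabeled setting, gives the bridge: AOs of $K'_{n_1,\dots,n_N}$ will correspond bijectively to such sort patterns, where ``canonical'' simply means that within any block of consecutive $V_1$-positions the labels appear in increasing order (vertices of $V_2,\dots,V_N$ are only tagged by which part they belong to).

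For the counting, I first arrange the $|n|$ unlabeled vertices of $V_2\cup\cdots\cup V_N$ in sequence; since the vertices within a given $V_i$ ($i\ge 2$) are indistinguishable, this yields $\binom{|n|}{n_2,\dots,n_N}$ sequences. Next, I insert each of the $n_1$ labeled vertices of $V_1$ into one of the $|n|+1$ gaps (before the first unlabeled vertex, between two consecutive unlabeled vertices, or after the last), contributing a factor of $(|n|+1)^{n_1}$. The key observation is that any two labeled vertices placed in the same gap must be incomparable in the induced AO: they share all of $V_2\cup\cdots\cup V_N$ as neighbors, and because no vertex of another part lies between them in the sort, their in- and out-neighborhoods are in fact identical. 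Hence the canonical rule forces them into increasing order, so each labeled vertex's gap choice is independent and the factor $(|n|+1)^{n_1}$ is exact.

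To close the argument, I verify bijectivity: each two-step pattern defines a valid topological sort, hence an AO on $K'$; conversely every AO on $K'$ yields a unique canonical sort of this form, because the relative order of any two comparable vertices is forced by the AO, while the labels within each $V_1$-gap are then fixed by the increasing-order rule. Multiplying the two counts gives $(1+|n|)^{n_1}\binom{|n|}{n_2,\dots,n_N}$ as required. The main (minor) obstacle is writing down the right ``unlabeled'' version of \cref{lem:canonical-sorts}: one needs to argue cleanly that passing from AOs of the fully labeled $K_{n_1,\dots,n_N}$ down to AOs of $K'_{n_1,\dots,n_N}$ collapses exactly the relabelings within $V_2,\dots,V_N$, and that the resulting equivalence classes are in bijection with the sort patterns produced by the two-step construction above.
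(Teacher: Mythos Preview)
Your proposal is correct and follows essentially the same approach as the paper's proof: first arrange the unlabeled vertices of $V_2,\dots,V_N$ to get the multinomial factor, then insert each labeled vertex of $V_1$ into one of the $1+|n|$ gaps, using the canonical-sort rule to handle ties. The paper's version is slightly terser in that it simply invokes \cref{lem:canonical-sorts} rather than spelling out the unlabeled adaptation, but the argument is the same.
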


\begin{proof}
The proof idea is a generalization of \cref{thm:bipartite-enum}. 
We refer to the unlabeled vertex sets as $V_2, \ldots, V_N$, and the labeled $n_1$-set as $V_1$.
We will count ways to construct an AO of $K'_{n_1, \ldots, n_N}$ by counting canonical topological sorts of $K'_{n_1, \ldots, n_N}$. First the edges disjoint from $V_1$ will be oriented, then all edges containing a vertex from $V_1$ will be oriented.

To orient edges disjoint from $V_1$, we pick the relative order in the topological sort of vertices in $V_2 \cup \cdots \cup V_N$. Since these vertices are unlabeled (within their respective vertex sets), we will collectively refer to any vertex in $V_i$ with the label $v^i$. We can order them by writing a sequence containing $n_2$ copies of $v^2$, $n_3$ copies of $v^3$, and so on until $n_N$ copies of $v^N$. The number of such sequences is given by the multinomial coefficient
\[
    \binom{|n|}{n_2, \ldots, n_N}\,.
\]

Next, given any such sequence $S$, to pick the orientations for the edges containing vertices in $V_1$, we simply pick where to insert each vertex $a_1, \ldots, a_{n_1} \in V_1$ into $S$ to create a canonical topological sort. $S$ if of length $|n|$, so there are $1 + |n|$ ``spaces'' in which we can place $a_1, \ldots, a_n$ (including the spaces on the far left and far right). Notice that if two $a_i$, $a_j$ get placed in the same space, then they are incomparable---their order within that space is determined since we are counting canonical sorts. So, we simply choose a space for each vertex in $V_1$, giving $(1 + |n|)^{n_1}$ choices. 

The equivalence between canonical sorts and AOs of $K_{n_1, \ldots, n_N}$ given in \cref{lem:canonical-sorts} completes the proof.
\end{proof}

\begin{rem}
Notice that we could easily alter this proof to count AOs of $K'_{n_1, \ldots, n_N}$ such that there are no sinks in vertex set $V_1$. The only modification needed is to disallow placement of vertices from $V_1$ in the rightmost space of the topological sort. This means that there are only $|n|$ choices for each $a \in V_1$, which gives the total number of such AOs as 
\[
    |n|^{n_1} \cdot \binom{|n|}{n_2, \ldots, n_N}\,.
\]
This will allow us to easily count the number of AUSOs of multipartite graphs as well.
\end{rem}

\begin{thm}
$\abs{\mathcal{A}(K_{n_1, \ldots, n_N})}$, the number of AOs of the complete $N$-partite graph $K_{n_1, \ldots, n_N}$ is given by
\[
    \sum_{(k_2, \ldots, k_N) \in \mathcal{K}} (-1)^{|n| -|k|} 
     (1 + |k|)^{n_1} |k|!  
    \prod_{i = 2}^N \stirling{n_i}{k_i} ,
\]
where $\mathcal{K} = [n_2] \times [n_3] \times \cdots \times [n_N]$. 
\end{thm}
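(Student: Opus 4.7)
The plan is to generalize the proof of \cref{thm:bipartite-enum} essentially verbatim. By \cref{lem:canonical-sorts}, acyclic orientations of $K_{n_1,\ldots,n_N}$ correspond bijectively to canonical topological sorts, so I will count these sorts by first counting all (unlabeled sort, labeling) pairs without the canonicity restriction, and then peeling off the non-canonical pairs by inclusion--exclusion. The alternating sum over $(k_2,\ldots,k_N)$ will arise naturally as an inclusion--exclusion expansion.

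Here is the setup. Using \cref{unlabeled} and the identity $\binom{|n|}{n_2,\ldots,n_N}\prod_{i=2}^N n_i!=|n|!$, the total number of pairs consisting of an unlabeled canonical sort of $K'_{n_1,\ldots,n_N}$ together with an arbitrary bijection between $[n_i]$ and $V_i$ for each $i\geq 2$ is exactly $(1+|n|)^{n_1}\,|n|!$; this is the $k_i=n_i$ term of the claimed formula. For each $i\geq 2$ and each $p\in[n_i-1]$, I define, in direct analogy with the bipartite proof, the set $A_i^{(p)}$ of (sort, label) pairs such that the $p$-th and $(p{+}1)$-th $V_i$-tokens along the sort are adjacent and carry labels in reverse order. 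A (sort, label) pair is a canonical topological sort precisely when it avoids every $A_i^{(p)}$, so inclusion--exclusion yields
\[
|\mathcal{A}(K_{n_1,\ldots,n_N})|=\sum_{(J_2,\ldots,J_N)}(-1)^{\sum_i|J_i|}\,\Bigl|\bigcap_{\substack{i\geq 2\\ p\in J_i}}A_i^{(p)}\Bigr|,
\]
where $J_i\subseteq[n_i-1]$ ranges independently for each $i$.

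Next I evaluate the inner cardinality for a fixed tuple $(J_2,\ldots,J_N)$, writing $k_i:=n_i-|J_i|$. The prescribed adjacencies force the $n_i$ tokens of $V_i$ into $k_i$ ``mega-clusters'' of consecutive positions whose sizes $s_{i,1},\ldots,s_{i,k_i}$ are determined by $J_i$, and contracting each mega-cluster to a single unlabeled token gives a bijection between valid sorts and arbitrary sorts of $K'_{n_1,k_2,\ldots,k_N}$. Hence \cref{unlabeled} applied to the contracted graph contributes $(1+|k|)^{n_1}\binom{|k|}{k_2,\ldots,k_N}$ sorts. For each such sort, the labels within every mega-cluster are forced to be strictly decreasing, so the number of admissible labelings is $\prod_i\binom{n_i}{s_{i,1},\ldots,s_{i,k_i}}$, and this decouples completely from the sort count.

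Finally I reorganize the outer sum by the value of $(k_2,\ldots,k_N)$. For fixed $k$, summing $\binom{n_i}{s_{i,1},\ldots,s_{i,k_i}}$ over all $J_i$ with $|J_i|=n_i-k_i$ is the same as summing over all compositions of $n_i$ into $k_i$ positive parts, which by the same counting as in \cref{lem:surj} (equivalently, surjections $[n_i]\to[k_i]$) gives $k_i!\,\stirling{n_i}{k_i}$. Combining with $\binom{|k|}{k_2,\ldots,k_N}\prod_i k_i!=|k|!$ collapses everything to $\sum_{\mathcal{K}}(-1)^{|n|-|k|}(1+|k|)^{n_1}|k|!\prod_{i=2}^N\stirling{n_i}{k_i}$. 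The main obstacle is conceptual rather than computational: the events $A_i^{(p)}$ depend on the underlying sort and not only on the labeling, so the delicate step is verifying that mega-cluster contraction is a genuine bijection with sorts of the contracted multipartite graph and that the adjacency and label-reversal constraints separate cleanly, enabling \cref{unlabeled} to be applied in each term.
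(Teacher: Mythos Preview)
Your proposal is correct and follows essentially the same approach as the paper: both count canonical topological sorts by inclusion--exclusion on the events $A_i^{(p)}$ (the paper's $\mathscr{B}_{V_i,k}$), invoke \cref{unlabeled} on the contracted/grouped configuration to count sorts, and use \cref{lem:surj} to collapse the sum over compositions into $k_i!\stirling{n_i}{k_i}$. The only differences are cosmetic---you index by $k_i$ where the paper reindexes to $j_i=n_i-k_i$, and you spell out the mega-cluster contraction bijection a bit more explicitly than the paper does.
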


\begin{proof}
First, we write an equivalent formula by reversing the order of summation along each $[n_i]$. Let $\mathcal{J} = \{0,\ldots, n_2-1\} \times \cdots \times \{0,\ldots, n_N-1\}$, and make the substitutions $j_i = n_i - k_i$. We will instead show that $\abs{\mathcal{A}(K_{n_1, \ldots, n_N})}$ is given by 
\begin{multline*}
    \sum_{(j_2, \ldots, j_N) \in \mathcal{J}} (-1)^{|j|} \cdot (1 + |n| - |j|)^{n_1}  (|n| - |j|)! \cdot \prod_{i = 2}^N \stirling{n_i}{n_i - j_i}.
\end{multline*}
\cref{unlabeled} gives a way to count $\abs{\mathcal{A}(K'_{n_1, \ldots, n_N})}$, and we may consider all \textit{relabelings} of vertices in sets $V_2, \ldots, V_N$. There are $\prod_{i = 2}^N n_i!$ such relabelings, and this gives a multiset $\mathscr{A}$ of AOs of $K_{n_1, \ldots, n_N}$ of size 
\[
    \abs{\mathscr{A}} = \abs{\mathcal{A}(K'_{n_1, \ldots, n_N})} \cdot \prod_{i = 2}^N n_i!= \left(1 + |n|\right)^{n_1} \cdot \binom{|n|}{n_2, \ldots, n_N} \cdot \prod_{i = 2}^N n_i!
\]
If we denote the set of all relabelings as $\mathcal{L}$, then there is an obvious correspondence between $\mathscr{A}$ and $\mathcal{A}(K'_{n_1, \ldots, n_N}) \times \mathcal{L}$. It is also clear that eliminating duplicates in the multiset would exactly yield $\mathcal{A}(K_{n_1, \ldots, n_N})$. In particular, using the equivalence of \cref{lem:canonical-sorts}, some of the topological sorts resulting from relabeling are non-canonical. We will count how many are non-canonical in a similar manner to \cref{thm:bipartite-enum}. 

For any canonical topological sort (an orientation) $\mathcal{O}$ of $K'_{n_1, \ldots, n_N}$, we can refer to the $k$'th occurrence of $v^i$, a vertex from the unlabeled set $V_i$, as $v^i_k$. Then, if $\ell$ is a relabeling, we denote the label of this vertex as $\ell(v^i_k)$. Let $v^i_k \sim v^i_{k + 1}$ denote that these two unlabeled vertices (from the same vertex set) are adjacent in the sort. That is to say, they are incomparable. Then, a non-canonical relabeling of $\mathcal{O}$ must have some $v^i_k \sim v^i_{k + 1}$ with $\ell(v^i_k) > \ell(v^i_{k + 1})$. 

Thus, for each vertex set $V_2, \ldots, V_N$ and each vertex $v^i_k \in V_i$ consider the set of non-canonical orientations / labeling pairs:
\[
    \mathscr{B}_{V_i, k} = \{ (\mathcal{O}, \ell) \in \mathcal{A}(K'_{n_1, \ldots, n_N}) \times \mathcal{L} \mid v^i_k \sim v^i_{k + 1}, \ell(v^i_k) > \ell(v^i_{k + 1}) \}\,.
\]
The union of these sets is the set of all non-canonical orientations / labeling pairs in $\mathscr{A}$. We will count this with inclusion-exclusion. Let 
\[
    \mathcal{I} = \bigcup_{i = 2}^N \{(V_i, k) \mid 1 \leq k < n_i\}\,.
\]
Then the principle of inclusion exclusion gives that the number of non-canonical pairs is
\[
    \sum_{\varnothing \neq J \subseteq \mathcal{I}} (-1)^{\abs{J} - 1} \abs{\bigcap_{(V, k) \in J} \mathscr{B}_{V, k}}\,.
\]
Analogously to \cref{thm:bipartite-enum}, we first consider all $J$ such that $j_2$ pairs in $V_2$ are incomparable, $j_3$ pairs in $V_3$ are incomparable, $\ldots, j_N$ pairs in $V_N$ are incomparable for some fixed $j_2, \ldots, j_N$. We can look at each vertex set independently and use \cref{lem:surj} to see that in total, the number of ways to choose $j_i$ indices in vertex set $V_i$ and relabel to get get something in $\bigcap_{(V, k) \in J} \mathscr{B}_{V, k}$ is 
\[
    \prod_{i = 2}^N (n_i - j_i)! \stirling{n_i}{n_i - j_i}.
\]
Then, for each such $J$, regardless of the indices and labelling, the number of topological sorts can be counted identically to \cref{unlabeled} by first ordering the groups of incomparable vertices in $V_2 \cup \cdots \cup V_N$, then selecting a space for each $a_1, \ldots, a_{n_1} \in V_1$. There are
\[
    \left(1 + |n| - |j| \right)^{n_1} \cdot \binom{|n| - |j|}{n_2 - j_2, \ldots, n_N - j_N}  
\]
ways. Therefore, if for $J \subseteq \mathcal{I}$, we denote $J_i = J \cap \{(V_i, k) \mid 1 \leq k < n_i\}$, then for any fixed $j_2, \ldots, j_N$,
\begin{multline*}
    \sum_{\substack{J \subseteq I\\\forall i, \abs{J_i} = j_i}} \abs{\bigcap_{(V, k) \in J} \mathscr{B}_{V, k} } = (1 + |n| - |j| )^{n_1}  (|n| - |j|)! \prod_{i = 2}^N \stirling{n_i}{n_i - j_i} \,.
\end{multline*}
This lets us rewrite the inclusion-exclusion as 
\begin{multline*}
\sum_{(j_2, \ldots, j_N) \in \mathcal{J}'} (-1)^{|j|-1}
 (1 + |n| - |j|)^{n_1} (|n| - |j|)! \prod_{i = 2}^N \stirling{n_i}{n_i - j_i} ,
\end{multline*}
where $\mathcal{J}' = \mathcal{J} \setminus \{(0, \ldots, 0)\}$. Subtracting this from $\abs{\mathscr{A}}$, we get the desired result.
\end{proof}

\begin{rem}
\label{rem:stanley}
Richard Stanley kindly suggested to us that an alternative approach to this enumeration problem might be to use the generating function for the chromatic polynomial $\chi(q)$ of complete
multipartite graphs $K_{n_1,\ldots,n_k}$ for fixed $k\ge 1$~\cite[Solution to Problem~5.6]{stanley-ec2}. 
Differentiating this expression with respect to $q$ and setting $q=0$ 
to get (up to sign) the number of AOs with a fixed sink 
gives the recurrence 
\begin{multline*}
    \abs{\mathcal{A}(K_{n_1, \ldots, n_N})} + \sum_{\ell = 0}^{n_1-1} (-1)^{n_1-\ell} \binom{n_1}{\ell} \abs{\mathcal{A}(K_{\ell, n_2, \ldots, n_N})}  + \\
    \cdots + \sum_{\ell = 0}^{n_N-1} (-1)^{n_N-\ell} \binom{n_N}{\ell} \abs{\mathcal{A}(K_{n_1, \ldots, n_{N-1}, \ell})} = 0.
\end{multline*}
\end{rem}

\begin{thm}
\label{thm:auso-multi}
The number of AUSOs with fixed sink of $K_{n_1 + 1, n_2, \ldots, n_N}$ is given by
\begin{multline*}
    \abs{\mathcal{A}(K_{n_1 + 1, n_2, \ldots, n_N}, s)} =  \sum_{(k_2, \ldots, k_N) \in \mathcal{K}} (-1)^{|n| - |k|)} \cdot |k|^{n_1} |k|!
    \prod_{i = 2}^N \stirling{n_i}{k_i} \,.
\end{multline*}
\end{thm}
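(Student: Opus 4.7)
The plan is to mirror the proof of the preceding theorem for $\abs{\mathcal{A}(K_{n_1,\ldots,n_N})}$, replacing the count of all AOs with the count of AOs having no sink in $V_1$. Recall from \cref{sec:complete-bipartite} that appending a vertex to $V_1$ and declaring it to be the sink gives a bijection between AOs of $K_{n_1, n_2, \ldots, n_N}$ with no sink in $V_1$ and AUSOs of $K_{n_1 + 1, n_2, \ldots, n_N}$ with that fixed sink. So it suffices to enumerate AOs of $K_{n_1, n_2, \ldots, n_N}$ forbidding sinks in $V_1$.

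First I would invoke the remark following \cref{unlabeled}: for the partially unlabeled graph $K'_{n_1, n_2, \ldots, n_N}$, the number of canonical topological sorts with no sink in $V_1$ is $|n|^{n_1} \cdot \binom{|n|}{n_2, \ldots, n_N}$, since the only change from that lemma is forbidding the rightmost slot for vertices of $V_1$. Multiplying by the $\prod_{i=2}^N n_i!$ possible relabelings of the vertices of $V_2, \ldots, V_N$ gives a multiset $\mathscr{A}$ of sort/relabeling pairs with
\[
    \abs{\mathscr{A}} = |n|^{n_1} \cdot |n|!.
\]
By \cref{lem:canonical-sorts}, AOs of $K_{n_1, n_2, \ldots, n_N}$ with no sink in $V_1$ correspond to canonical topological sorts, so we must subtract from $\abs{\mathscr{A}}$ the pairs whose relabeled sort is non-canonical.

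Next, define sets $\mathscr{B}_{V_i, k}$ of non-canonical pairs exactly as in the previous theorem and apply inclusion-exclusion over $\mathcal{I} = \bigcup_{i=2}^N \{(V_i, k) \mid 1 \leq k < n_i\}$. For a fixed vector $(j_2, \ldots, j_N)$ specifying $j_i$ adjacent incomparable pairs within $V_i$, \cref{lem:surj} shows the number of ways to choose those pairs and label each resulting block in reverse is $\prod_{i=2}^N (n_i - j_i)! \, \stirling{n_i}{n_i - j_i}$. The only change from the AO proof is in the count of topological sorts extending such a configuration: since vertices of $V_1$ are still forbidden from the rightmost slot, this count becomes
\[
    (|n| - |j|)^{n_1} \cdot \binom{|n| - |j|}{n_2 - j_2, \ldots, n_N - j_N},
\]
rather than the $(1 + |n| - |j|)^{n_1}$ factor used in the AO case.

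Finally, subtracting the resulting inclusion-exclusion sum from $\abs{\mathscr{A}}$ and absorbing the leading term into the sum as the $(0, \ldots, 0)$ summand produces
\[
    \sum_{(j_2, \ldots, j_N) \in \mathcal{J}} (-1)^{|j|} (|n| - |j|)^{n_1} (|n|-|j|)! \prod_{i=2}^N \stirling{n_i}{n_i - j_i}.
\]
Substituting $k_i = n_i - j_i$, so that $|k| = |n| - |j|$ and $(-1)^{|j|} = (-1)^{|n| - |k|}$, converts this expression to the stated formula. The main steps are essentially already established in the preceding theorem; the only substantive observation is that the single-slot restriction propagates unchanged through the inclusion-exclusion, turning every $(1 + |n| - |j|)^{n_1}$ into $(|n| - |j|)^{n_1}$, and I expect no new obstacle beyond verifying that bookkeeping.
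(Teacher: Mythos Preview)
Your proposal is correct and follows exactly the approach the paper takes: the paper's proof consists of a single sentence stating that one repeats the preceding argument while counting only AOs of $K_{n_1,\ldots,n_N}$ with no sink in $V_1$ and invoking the remark after \cref{unlabeled}. Your write-up simply spells out that sentence in full detail, with the key change being the replacement of $(1+|n|-|j|)^{n_1}$ by $(|n|-|j|)^{n_1}$ throughout.
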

\begin{proof}
The proof is identical by just counting the number of AOs of $K_{n_1, \ldots, n_N}$ which have no sinks in vertex set $V_1$, and using the remark after \cref{unlabeled}.
\end{proof}

\section{Tur\'an graphs and Extremality}
\label{sec:cameron}

Recall that the {\em Tur\'an graph} $T(n,r)$ is a graph on $n$ vertices, where the vertex set is partitioned into $r$ subsets of sizes as close to each other as possible, and edges connecting vertices if and only if they belong to different subsets. To be precise, it is the complete multipartite graph of the form $K_{\lceil n/r \rceil, \lceil n/r \rceil, \dots, \lfloor n/r \rfloor,  \lfloor n/r \rfloor}$ with $r$ parts.

Let $u_{n,r} = \abs{\mathcal{A}(T(n, r), s)}$ denote the number of acyclic orientations with unique sink for the Tur\'an graph $T(n,r)$. 
Given a sequence $\{a_n\}$, define the difference operator $\delta$ by $\delta(\{a_n\}) = \{a_n - a_{n-1}\}$.

\begin{thm}
\label{thm:aou-turan}
Fix an integer $k \geq 0$. Then $u_{r+k,r}$ is given by the $(r+k)$'th entry of the sequence $\delta^k \{(r+k-1)!\}$ provided $r \geq k$. In particular, for such $r$, $u_{r+k,r} = u_{r+k,r+1} - u_{r+k-1,r}$.
\end{thm}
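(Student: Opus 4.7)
The plan is to apply Theorem~\ref{thm:auso-multi} directly to $T(r+k,r)$ and identify the resulting sum with the iterated forward difference $\delta^k$ applied to $\{(n-1)!\}$.

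First I would observe that for $r \geq k$, the Tur\'an graph $T(r+k,r)$ is precisely the complete multipartite graph with $k$ parts of size $2$ and $r-k$ parts of size $1$. The base case $k=0$ gives $T(r,r) = K_r$ and $u_{r,r} = (r-1)! = \delta^0\{(n-1)!\}\bigl|_{n=r}$, so from here on I would assume $k \geq 1$. By Greene--Zaslavsky the AUSO count is independent of the chosen sink, so whenever $r > k$ I would place the sink inside a size-$1$ part. In the notation of Theorem~\ref{thm:auso-multi} this corresponds to taking $n_1 = 0$, with the remaining parts $(n_2,\ldots,n_N)$ consisting of $k$ copies of $2$ and $r-k-1$ copies of $1$. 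Since $\stirling{1}{1} = \stirling{2}{1} = \stirling{2}{2} = 1$, the product of Stirling numbers is identically $1$, and for each size-$2$ part we only record the binary choice $k_i \in \{1,2\}$. Writing $j$ for the number of size-$2$ parts with $k_i = 2$, a short bookkeeping step gives $|n| = r+k-1$, $|k| = j+r-1$, $|k|^{n_1} = 1$, so Theorem~\ref{thm:auso-multi} reduces to
\[
u_{r+k,r} \;=\; \sum_{j=0}^{k} (-1)^{k-j}\binom{k}{j}(j+r-1)! \;=\; \sum_{i=0}^{k} (-1)^{i}\binom{k}{i}(r+k-i-1)!,
\]
which is exactly $\delta^k\{(n-1)!\}$ evaluated at $n = r+k$.

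For the corner case $r = k$, where no size-$1$ part is available for the sink, I would instead set $n_1 = 1$ with the remaining parts being $k-1$ twos; an analogous computation produces an extra factor $(j+k-1)$ inside the sum, which I would absorb via the identity $(j+k-1)(j+k-1)! = (j+k)! - (j+k-1)!$ so that the expression telescopes into $\delta^{k-1}\{(n-1)!\}|_{n=2k} - \delta^{k-1}\{(n-1)!\}|_{n=2k-1} = \delta^{k}\{(n-1)!\}|_{n=2k}$, matching the main formula.

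Finally, the ``In particular'' statement is simply the basic difference identity $\delta^k a_n = \delta^{k-1}a_n - \delta^{k-1}a_{n-1}$ in disguise: $u_{r+k,r+1}$ corresponds via the formula to $\delta^{k-1}\{(n-1)!\}|_{n=r+k}$ (the total vertex count $r+k$ is unchanged but the parameter $k$ drops to $k-1$), while $u_{r+k-1,r}$ corresponds to $\delta^{k-1}\{(n-1)!\}|_{n=r+k-1}$, and subtracting yields $u_{r+k,r}$. The real obstacle in this proof is essentially bookkeeping inside Theorem~\ref{thm:auso-multi}---correctly tracking $|n|$, $|k|$, the sign $(-1)^{|n|-|k|}$, and the collapse of the Stirling numbers---together with the small telescoping trick needed to handle the boundary case $r=k$.
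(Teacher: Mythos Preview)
Your argument is correct. It matches the paper's second approach, which also applies \cref{thm:auso-multi} directly, though with a minor difference: the paper places the sink in a size-$2$ part throughout (your $r=k$ computation, applied uniformly), obtaining the extra linear factor in every term and then verifying the recurrence $u_{r+k,r}+u_{r+k-1,r}=u_{r+k,r+1}$ by summing the resulting expression. Your choice to put the sink in a size-$1$ part when $r>k$ is cleaner, since $|k|^{n_1}=1$ and the binomial form of $\delta^k\{(n-1)!\}$ drops out immediately.

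The paper's \emph{primary} proof, however, takes a genuinely different route: it uses the deletion--contraction identity for AUSOs (\cref{lem:aou-del-con}). Adding the edge $e$ between the two vertices of one size-$2$ part of $T(r+k,r)$ produces $T(r+k,r+1)$, while contracting $e$ produces $T(r+k-1,r)$; the recurrence $u_{r+k,r}=u_{r+k,r+1}-u_{r+k-1,r}$ is then immediate, and the explicit $\delta^k$ formula follows by induction on $k$ from $u_{r,r}=(r-1)!$. This route avoids the bookkeeping inside \cref{thm:auso-multi} entirely and makes no case distinction at $r=k$. Your approach, by contrast, yields the closed form first and the recurrence as a corollary; it is self-contained once \cref{thm:auso-multi} is available, but leans on that heavier machinery where the paper's main argument uses only a one-line graph operation.
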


Some data for the number of acyclic orientations with fixed sink is given in \cref{tab:turan}. Clearly, the rightmost column is the sequence $\{(n-1)!\}$ because $T(n,n)$ is the complete graph $K_n$ and there are $(n-1)!$ such acyclic orientations. 
The column corresponding to $r=2$ is covered by the discussion on bipartite graphs in \cref{sec:complete-bipartite}.

\begin{table}[htbp!]
    \centering
    \begin{tabular}{c|ccccccc}
        $n$\textbackslash $r$ & 1 & 2 & 3 & 4 & 5 & 6 & 7 \\
        \hline
        1 & \underline{1} \\
        2 & \underline{0} & \underline{1} \\
        3 & 0 & \underline{1} & \underline{2} \\
        4 & 0 & \underline{3} & \underline{4} & \underline{6} \\
        5 & 0 & 7 & \underline{14} & \underline{18} & \underline{24} \\
        6 & 0 & 31 & \underline{64} & \underline{78} & \underline{96} & \underline{120} \\
        7 & 0 & 115 & 284 & \underline{426} & \underline{504} & \underline{600} & \underline{720}
    \end{tabular}
    \caption{The number of  acyclic orientations with fixed sink of $T(n,r)$ for small values of $n$ and $r$. The underlined numbers fall under the purview of \cref{thm:aou-turan}.}
    \label{tab:turan}
\end{table}{}

To prove \cref{thm:aou-turan}, we will appeal to a basic result about AUSOs. For a graph $G$ containing an edge $e$, let $G \setminus e$ denote the graph $G$ with the edge $e$ removed and $G / e$ denote the graph $G$ with the edge $e$ contracted. For a bijective proof of the following lemma, see Lemma~2.2 in \cite{gebhard2000sinks}.

\begin{lem}
\label{lem:aou-del-con}
For any graph $G$, we have that
\[
    \abs{\mathcal{A}(G, s)} = \abs{\mathcal{A}(G \setminus e, s)} + \abs{\mathcal{A}(G / e, s)}.
\]
\end{lem}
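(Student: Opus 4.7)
The plan is to establish this identity bijectively, constructing an explicit bijection $\Phi \colon \mathcal{A}(G, s) \to \mathcal{A}(G \setminus e, s) \sqcup \mathcal{A}(G/e, s)$. Fix the edge $e = uv$ and label vertices so that $u < v$; by the Greene--Zaslavsky result that the number of AUSOs is independent of the sink, we may also assume $s \notin \{u, v\}$.

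I would first establish a structural dichotomy for each $\mathcal{O} \in \mathcal{A}(G, s)$. The orientation $\mathcal{O} \setminus e$ is always acyclic with $s$ still a sink, but it fails to be an AUSO precisely when $e$ was the only out-edge of its tail in $\mathcal{O}$, in which case that tail becomes an additional sink. Separately, $\mathcal{O}/e$ is acyclic---and hence lies in $\mathcal{A}(G/e, s)$---if and only if $u$ and $v$ are incomparable in $\mathcal{O} \setminus e$, since otherwise a directed $u$-$v$ path would close a cycle with the contracted image of $e$. One checks that when $\mathcal{O}/e$ is acyclic, the merged vertex $[uv]$ inherits enough out-edges from $u$ and $v$ to avoid becoming a sink, so $s$ remains the unique sink.

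The second step partitions $\mathcal{A}(G, s)$ into three classes: (A) those $\mathcal{O}$ in which $u, v$ are comparable in $\mathcal{O} \setminus e$; (B) those with $u, v$ incomparable in $\mathcal{O} \setminus e$, with $\mathcal{O} \setminus e$ still in $\mathcal{A}(G \setminus e, s)$, and with $e$ oriented $u \to v$; (C) all remaining AUSOs. Define $\Phi$ to send $A \cup B$ to $\mathcal{O} \setminus e$ and $C$ to $\mathcal{O}/e$; the dichotomy above ensures these outputs land in the correct target sets, since AUSOs in class $C$ satisfy $u, v$ incomparable in $\mathcal{O} \setminus e$ either by definition or because an extra sink forces it.

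The main work is verifying bijectivity by constructing the inverse. Given $\mathcal{O}' \in \mathcal{A}(G \setminus e, s)$, add $e$ in the orientation compatible with any directed $u$-$v$ path in $\mathcal{O}'$ if $u, v$ are comparable, or as $u \to v$ if they are incomparable, yielding the unique preimage in $A \cup B$. Given $\mathcal{O}'' \in \mathcal{A}(G/e, s)$, splitting $[uv]$ into $u, v$ produces an orientation of $G \setminus e$ in which $u, v$ are incomparable and at least one endpoint is a non-sink (since $[uv]$ has an out-edge in $\mathcal{O}''$); then add $e$ oriented away from whichever endpoint is a sink, defaulting to $v \to u$ if neither is, which gives the unique preimage in class $C$. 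The main obstacle is the careful case analysis ensuring these constructions are mutually inverse, particularly at vertices $w \in N_u \cap N_v$ where parallel edges in $G$ merge into a single edge of $G/e$; acyclicity of $\mathcal{O}''$ forces the split to orient both parallel copies consistently, which is precisely what makes the inverse well-defined.
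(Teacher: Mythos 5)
The paper does not prove this lemma at all: it defers to Lemma~2.2 of \cite{gebhard2000sinks} for a bijective proof, so there is no in-paper argument to compare against. Your proposal is a correct, self-contained bijection of the same general flavor, and the points you flag as delicate do check out. For $\mathcal{O}$ in class $A$, comparability of $u,v$ in $\mathcal{O}\setminus e$ forces $e$ to agree with the directed $u$--$v$ path and guarantees the tail of $e$ retains an out-edge, so deleting $e$ creates no new sink; for any $\mathcal{O}$ with $u,v$ incomparable in $\mathcal{O}\setminus e$, each common neighbour $w$ has $uw$ and $vw$ oriented consistently (otherwise $u\to w\to v$ or $v\to w\to u$ would make $u,v$ comparable), so $\mathcal{O}/e$ is a well-defined orientation of the simple graph $G/e$, and $[uv]$ cannot become a sink because the head of $e$ is not $s$ and hence has an out-edge other than $e$; and in the inverse on class $C$, the extra sink of $\mathcal{O}\setminus e$, when it exists, can only be the tail of $e$, which is exactly why your rule ``orient $e$ away from the sink endpoint, defaulting to $v\to u$'' recovers the original orientation of $e$ in every subcase of $C$. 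Two minor points would deserve a sentence in a polished version: the reduction to $s\notin\{u,v\}$ via Greene--Zaslavsky requires $G$ connected on at least three vertices (the excluded cases are trivial, with all three counts vanishing or $G$ a single edge), and when $e$ is a bridge the set $\mathcal{A}(G\setminus e,s)$ is empty, but then your classes $A$ and $B$ are automatically empty too, so the bijection degenerates correctly to $\mathcal{A}(G,s)\leftrightarrow\mathcal{A}(G/e,s)$.
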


\begin{proof}[Proof of \cref{thm:aou-turan}]
For $r \geq k$, the graph $T_{r+k,r}$ has $r$ parts with the first $k$ parts having two vertices each and the remainder having one. Now, consider the graph $G$ obtained by adding the edge $e$ joining the two vertices in the $k$'th part. The key observations are that $G = T_{r+k,r+1}$ and that $G / e = T_{r+k-1,r}$. By \cref{lem:aou-del-con}, we find that $u_{r+k,r} = u_{r+k,r+1} - u_{r+k-1,r}$. By appealing to the case of $k=0$, for which the result is $(r-1)!$, we obtain the proof.

Another way to prove \cref{thm:aou-turan} is starting with \cref{thm:auso-multi}
for the case $n_1 = \cdots = n_k = 2$ and $n_{k+1} = \cdots = n_r = 1$. Then the sum simplifies to
\begin{equation}
\label{uformula}
u_{r+k,r} =  \sum_{j_2 = 1}^2 \cdots \sum_{j_k = 1}^2 (-1)^{j_2 + \cdots + j_k}
(j_2 + \cdots + j_k + r-k) (j_2 + \cdots + j_k + r-k)!.
\end{equation}
Now, using \eqref{uformula}, we can compute $u_{r+k,r} + u_{r+k-1,r}$ by summing over the $j_k$ variable to obtain
\begin{multline*}
-(j_2 + \cdots + j_{k-1} + r-k+1) (j_2 + \cdots + j_{k-1} + r-k+1)! \\
+(j_2 + \cdots + j_{k-1} + r-k+2) (j_2 + \cdots + j_{k-1} + r-k+2)!
\end{multline*}
The first term cancels exactly with the formula for $u_{r+k-1,r}$ and the second one is 
$u_{r+k,r+1}$, proving the result.
\end{proof}

\subsection{Extremal Graphs for AOs}
\label{sec:extremal}

We conclude this contribution by addressing the following extremal problem on the number of acyclic orientations: given a fixed number $n$ of vertices, and $m$ of edges, which graph(s) maximize the number of AOs? 

It is conjectured by Cameron, Glass, and Schumacher~\cite{cameron2014acyclic} that Tur\'an graphs with two parts maximize the number of AOs over graphs with the same number of vertices and edges. We prove here that Tur\'an graphs with parts of size at most 2 are also maximizers. 

\begin{lem}
\label{slide}
Let $G$ be any graph containing an edge $e = (a, b)$ such that 
\[
    N(a) \setminus \{b\} \supseteq N(b) \setminus \{a\},
\]
where $N$ denotes the set of neighboring vertices. Then, for any vertex $c$ such that the edge $e' = (c, b)$ is not in $G$, $\abs{\mathcal{A}(G)} \leq \abs{\mathcal{A}(G \setminus e + e')}$.

\begin{proof}
Let $G' = G \setminus e + e'$. We know by the deletion-contraction recurrence for AOs that $\abs{\mathcal{A}(G)} = \abs{\mathcal{A}(G \setminus e)} + \abs{\mathcal{A}(G / e)}$ (equivalently for $G'$ and $e'$). But notice that $G \setminus e = G' \setminus e'$, so we just need show that $\abs{\mathcal{A}(G / e)} \leq \abs{\mathcal{A}(G' / e')}$

First, $G / e \cong G \setminus \{b\}$, since the new vertex $ab$ resulting from contracting $e = (a, b)$ has neighborhood 
\[
    N(ab) = \left( N(a) \setminus \{b\} \right) \cup \left( N(b) \setminus \{a\} \right) = N(a) \setminus \{b\}\,,
\]
while the rest of $G$ is unchanged. On the other hand, $G' / e'$ has new vertex $bc$ resulting from contracting $e' = (c, b)$ with the neighborhood 
\[
    N(bc) = \left( N(b) \setminus \{c\} \right) \cup \left( N(c) \setminus \{b\} \right) \supseteq N(c) \setminus \{b\}\,.
\]
Therefore, $G' / e' \supseteq G \setminus \{b\}$, as the neighborhood of $bc$ is no smaller than that of $c$. This gives the desired
\[
    G / e \subseteq G' / e' \implies \abs{\mathcal{A}(G/ e)} \leq \abs{\mathcal{A}(G'/ e')} \implies \abs{\mathcal{A}(G)} \leq \abs{\mathcal{A}(G')}\,. \qedhere
\]
\end{proof}
\end{lem}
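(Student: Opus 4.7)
The plan is to exploit the standard deletion-contraction identity $\abs{\mathcal{A}(H)} = \abs{\mathcal{A}(H \setminus f)} + \abs{\mathcal{A}(H / f)}$ for any edge $f$ of a graph $H$ (the same identity already invoked in \cref{lem:aou-del-con}, applied here to $\mathcal{A}(H)$ rather than $\mathcal{A}(H,s)$). Applying it to $G$ with $e$ and to $G' := G\setminus e + e'$ with $e'$, and then observing that $G \setminus e = G' \setminus e'$ (both are $G$ with $e$ removed and $e'$ absent), the desired inequality $\abs{\mathcal{A}(G)} \leq \abs{\mathcal{A}(G')}$ reduces to the single claim
\[
\abs{\mathcal{A}(G / e)} \;\leq\; \abs{\mathcal{A}(G' / e')}.
\]

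To establish this reduced inequality, I would analyze both contracted graphs on the common vertex set $V(G) \setminus \{b\}$ (where in $G/e$ the merged vertex is identified with $a$, and in $G'/e'$ the merged vertex is identified with $c$). In $G/e$, the merged vertex has neighborhood $(N(a)\setminus\{b\}) \cup (N(b)\setminus\{a\})$, which collapses to $N(a)\setminus\{b\}$ by the domination hypothesis $N(a) \setminus \{b\} \supseteq N(b) \setminus \{a\}$. Hence $G/e$ is isomorphic to the induced subgraph $G - b$. In $G'/e'$, the merged vertex (playing the role of $c$) has neighborhood at least $N_{G'}(c)\setminus\{b\} = N_G(c)$, so every edge of $G - b$ is also an edge of $G'/e'$; possibly $G'/e'$ has extra edges contributed by $N(b)\setminus\{a,c\}$. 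Thus there is a subgraph inclusion $G/e \subseteq G'/e'$ on the same vertex set.

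The final step is then the monotonicity principle that $H \subseteq K$ (same vertex set) implies $\abs{\mathcal{A}(H)} \leq \abs{\mathcal{A}(K)}$. This is itself an easy induction on the number of extra edges using deletion-contraction, since $\abs{\mathcal{A}(K)} = \abs{\mathcal{A}(K \setminus f)} + \abs{\mathcal{A}(K/f)} \geq \abs{\mathcal{A}(K \setminus f)}$ for any edge $f \in K \setminus H$. Combined with $\abs{\mathcal{A}(G \setminus e)} = \abs{\mathcal{A}(G' \setminus e')}$, the chain of inequalities closes.

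The main obstacle — though a minor one — is bookkeeping of vertex identifications after contraction, in particular ensuring that the ``new'' vertex in $G/e$ really does play the role of $c$'s counterpart in $G'/e'$ under the inclusion, so that incidence structures match. The hypothesis $N(a)\setminus\{b\} \supseteq N(b)\setminus\{a\}$ is used exactly once, to guarantee that $G/e$ loses no edges to $N(b)\setminus\{a\}$ that might otherwise be missing in $G'/e'$.
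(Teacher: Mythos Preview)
Your proposal is correct and follows essentially the same approach as the paper: apply deletion--contraction to both $G$ at $e$ and $G'$ at $e'$, cancel the equal deletion terms, and then show $G/e \cong G - b \subseteq G'/e'$ via the domination hypothesis. You even supply a justification for the monotonicity step that the paper leaves implicit.
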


\begin{thm}
\label{thm:extremal}
For $m \geq \binom{n}{2} - \floor{\frac{n}{2}}$, the graph whose complement is a matching maximizes the number of AOs over all graphs on the same number of vertices and edges.

\begin{proof}
\cref{slide} can be interpreted in the complement: if there is an edge $e = (c, b)$ in $\overline{G}$, and a vertex $a$ with $N(a) \subseteq N(b)$, then we can replace $e$ with $e' = (a, b)$ in the complement without decreasing the number of AOs in $G$. 

In particular, if $a$ is an isolated vertex, we can always ``slide" any edge to $a$. If there are at most $\floor{\frac{n}{2}}$ edges in the complement and it is not a matching, then it has an isolated vertex. So for any graph whose complement is not a matching, there is a series of edge slides which do not decrease the number of AOs, eventually resulting in a graph whose complement is a matching.
\end{proof}
\end{thm}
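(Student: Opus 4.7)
The plan is to reformulate Lemma \ref{slide} in terms of the complement $\overline{G}$ and then apply it repeatedly until $\overline{G}$ is a matching. The hypothesis $m \geq \binom{n}{2} - \floor{\frac{n}{2}}$ is equivalent to $|E(\overline{G})| \leq \floor{\frac{n}{2}}$, and the claimed maximizer is precisely the unique graph on $n$ vertices whose complement is a perfect (or near-perfect) matching of the right size. The first step is the translation: the edge swap in Lemma \ref{slide} removes $(a,b) \in G$ and inserts $(c,b) \notin G$, which in $\overline{G}$ removes $(c,b)$ and inserts $(a,b)$. By complementing, the neighborhood hypothesis $N_G(a) \setminus \{b\} \supseteq N_G(b) \setminus \{a\}$ is equivalent to $N_{\overline{G}}(a) \setminus \{b\} \subseteq N_{\overline{G}}(b) \setminus \{a\}$, which is automatic whenever $a$ is an isolated vertex of $\overline{G}$. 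So any isolated vertex in $\overline{G}$ may absorb any edge of $\overline{G}$ without decreasing $|\mathcal{A}(G)|$.

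The second ingredient is the existence of an isolated vertex in $\overline{G}$ whenever $\overline{G}$ is not already a matching. This is a short counting argument: if $|E(\overline{G})| \leq \floor{\frac{n}{2}}$ and some vertex has $\overline{G}$-degree at least $2$, then the endpoints of the edges of $\overline{G}$ cover at most $2|E(\overline{G})| - 1 \leq n - 1$ vertices, so at least one vertex $a$ is incident to no edge of $\overline{G}$.

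The third step is to iterate. As long as $\overline{G}$ is not a matching, pick a vertex $v$ with $\deg_{\overline{G}}(v) \geq 2$, any edge $(v,w) \in \overline{G}$, and an isolated vertex $a$, and slide $(v,w)$ to $(a,w)$; by the first step this does not decrease $|\mathcal{A}(G)|$, and it preserves $|E(\overline{G})|$. To guarantee termination I would use the monovariant
\[
\Phi(\overline{G}) = \sum_{u} \binom{\deg_{\overline{G}}(u)}{2}.
\]
Only $v$ and $a$ change degree, and $a$ moves from degree $0$ to $1$ (contributing nothing to $\Phi$), so $\Phi$ drops by $\binom{\deg_{\overline{G}}(v)}{2} - \binom{\deg_{\overline{G}}(v)-1}{2} = \deg_{\overline{G}}(v) - 1 \geq 1$. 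Since $\Phi = 0$ exactly when $\overline{G}$ is a matching, the procedure terminates at a graph whose complement is a matching, and along the way $|\mathcal{A}(G)|$ never decreased, establishing extremality.

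The only real subtlety I anticipate is getting the complement-side translation of Lemma \ref{slide} exactly right (keeping track of the $\setminus\{a\}, \setminus\{b\}$ bookkeeping when passing to complements) and confirming that the counting bound $2|E(\overline{G})|-1 \leq n-1$ is tight enough in both parities of $n$; neither looks like it will pose a genuine difficulty. Everything else is a clean monovariant argument driven by Lemma \ref{lem:aou-del-con} under the hood.
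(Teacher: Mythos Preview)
Your proposal is correct and follows essentially the same approach as the paper: reinterpret Lemma~\ref{slide} in the complement, observe that an isolated vertex of $\overline{G}$ always satisfies the hypothesis, and repeatedly slide edges onto isolated vertices until $\overline{G}$ is a matching. Your additions---the explicit complement translation of the neighborhood condition, the counting bound $2|E(\overline{G})|-1 \le n-1$ for the existence of an isolated vertex, and the monovariant $\Phi(\overline{G}) = \sum_u \binom{\deg_{\overline{G}}(u)}{2}$ guaranteeing termination---supply details that the paper leaves implicit but do not change the underlying argument.
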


\begin{rem}
The complement of a matching is a Tur\'an graph with parts of sizes 1 or 2, so \cref{thm:extremal} proves the claim.
\end{rem}

\section*{Acknowledgements}
We would like to thank B. B\'enyi, D. Grinberg and R. Stanley for insightful comments.
AA would like to thank M. Ravichandran for discussions on toppleable permutations while he was in residence at Institut Mittag-Leffler in Djursholm, Sweden during the semester of Spring, 2020.
AA was partially supported by a UGC Centre for Advanced Study grant, by Department of Science and Technology grant EMR/2016/ 006624, and by grant no. 2016-06596 of the Swedish Research Council. 
DH was partially supported by Georgia Tech Mathematics REU NSF grant DMS-1851843. 
The research of PT is supported in part by the NSF grant DMS-1811935.

\bibliographystyle{alpha}
\bibliography{bibliography}

\end{document}